\newtheorem{defn}{Definition}[section]
\newtheorem{thm}[defn]{Theorem}
\newtheorem{lem}[defn]{Lemma}
\newtheorem{prop}[defn]{Proposition}
\newtheorem{cor}[defn]{Corollary}
\newtheorem{ex}[defn]{Example}
\newtheorem{re}[defn]{Remark}
\begin{document}
\title{{\bf Structure of simple multiplicative Hom-Jordan algebras}}
\author{\normalsize \bf Chenrui Yao,  Yao Ma,  Liangyun Chen}
\date{{\small{ School of Mathematics and Statistics,  Northeast Normal University,\\ Changchun 130024, CHINA}}} \maketitle
\date{}

   {\bf\begin{center}{Abstract}\end{center}}

In this paper, we mainly study structure of simple multiplicative Hom-Jordan algebras. We discuss equivalent conditions for multiplicative Hom-Jordan algebras being solvable, simple and semi-simple. Applying these theorems, we give a theorem on the classification of simple multiplicative Hom-Jordan algebras. Moreover, some propositions about bimodules of multiplicative Hom-Jordan algebras are also obtained.

\textbf{2010 Mathematics Subject Classification:} 17C10, 17C20, 17C50, 17B10, 17A30
\renewcommand{\thefootnote}{\fnsymbol{footnote}}
\footnote[0]{Keywords: \,  Bimodules, simple, classification, structure, Hom-Jordan algebras}
\footnote[0]{ Corresponding author(L. Chen): chenly640@nenu.edu.cn.}
\footnote[0]{Supported by  NNSF of China (Nos. 11771069 and 11801066), NSF of  Jilin province (No. 20170101048JC),   the project of Jilin province department of education (No. JJKH20180005K) and the Fundamental Research Funds for the Central Universities(No. 130014801).}

\section{Introduction}
Algebras where the identities defining the structure are twisted by a homomorphism are called Hom-algebras. Many predecessors have investigated Hom-algebras in the literature recently. The theory of Hom-algebras started from Hom-Lie algebras introduced and discussed in \cite{J1, D1, D2, D3}. Hom-associative algebras were introduced in \cite{M3} while Hom-Jordan algebras were introduced in \cite{M2} as twisted generalization of Jordan algebras.

In recent years, the vertex operator algebras are becoming more and more popular because of its importance. As a result, more and more people study the work about the vertex operator algebras. In \cite{L2}, by using the structure of Heisenberg algebras, Lam constructed a vertex operator algebra such that the weight  two space $V_{2} \cong A$ for a given simple Jordan algebra $J$ of type A, B or C over $\mathbb{C}$. In \cite{AT},  Ashihara gave a counterexample to the following assertion:  If $R$ is a subalgebra of the Griess algebra, then the weight two space of the vertex operator subalgebra VOA$(R)$ generated by $R$ coincides with $R$ by using a vertex operator algebra associated with the simple Jordan algebra of type D. Zhao, H. B. constructed simple quotients $\bar{V}_{\mathscr{J}, r}$ for $r \in \mathbb{Z}_{\neq 0}$ using dual-pair type constructions, where $\bar{V}_{\mathscr{J}, r}$ satisfies that $(\bar{V}_{\mathscr{J}, r})_{0} = \mathbb{C}1$, $(\bar{V}_{\mathscr{J}, r})_{1} = \{0\}$ and $(\bar{V}_{\mathscr{J}, r})_{2}$ is isomorphic to the type B Jordan algebra $\mathscr{J}$. Moreover, he reproved that $V_{\mathscr{J}, r}$ is simple if $r \notin \mathbb{Z}$ in his paper \cite{ZHB}.

The structure of Hom-algebras seems to be more complex because of variety of twisted maps. But structure of original algebras are pretty clear. So one of the ways to study the structure of Hom-algebras is to look for relationships between Hom-algebras and their induced algebras. In \cite{M3}, Makhlouf and Silvestrov introduced structure of Hom-associative algebras and Hom-Leibniz algebras together with their induced algebras. In \cite{L1}, Li, X. X. studied the structure of multiplicative Hom-Lie algebras and gave equivalent conditions for a multiplicative Hom-Lie algebra to be solvable, simple and semi-simple. By a similar analysis, we generalize above results in Hom-Jordan algebras successfully in this paper.

It's well known that simple algebras play an important role in structure theory. Similarly, it is very necessary to study simple Hom-algebras in Hom-algebras theory. In \cite{C1}, Chen, X. and Han, W. gave a classification theorem about simple multiplicative Hom-Lie algebras. Using some theorems got in Section \ref{se:3}, we generalize the above theorem into Hom-Jordan algebras.

Nowadays, one of the most modern trends in mathematics has to do with representations and deformations. The two topics are important tools in most parts of Mathematics and Physics. Representations of Hom-Lie algebras were introduced and studied in \cite{B2, S1}. But representations of Hom-Jordan algebras came much later. In 2018, Attan gave the definition of bimodules of Hom-Jordan algebras in his paper \cite{A1}. In \cite{A2}, Agrebaoui, Benali and Makhlouf study representations of simple Hom-Lie algebras and gave some propositions about it. In this paper, we also give some propositions about bimodules of Hom-Jordan algebras using similar methods.

The paper is organised as follows: In Section \ref{se:2}, we will introduce some basic definitions and prove a few lemmas which can be used in what follows. In Section \ref{se:3}, we mainly show that three important theorems, Theorem \ref{thm:3.3}, \ref{thm:3.5} and \ref{thm:3.6}, which are about solvability, simpleness and semi-simpleness of multiplicative Hom-Jordan algebras respectively. In Section \ref{se:4}, we give a theorem(Theorem \ref{thm:4.1}) on the construction of $n$-dimensional simple Hom-Jordan algebras at first. Next we give our main theorem in this section, Theorem \ref{thm:4.3}, which is about classification of simple multiplicative Hom-Jordan algebras. In Section \ref{se:5}, we'll prove a very important theorem, Theorem \ref{thm:5.5}, which is about relationships between bimodules of Hom-Jordan algebras and modules of their induced Jordan algebras. Moreover, some propositions about bimodules of simple Hom-Jordan algebras are also obtained as an application of Theorem \ref{thm:5.5}.
\section{Preliminaries}\label{se:2}
\begin{defn}\cite{M1}
\it{A Jordan algebra $J$ over a field $\rm{F}$ is an algebra satisfying for any $x, y \in J$,
\begin{enumerate}[(1)]
\item $x \circ y = y \circ x$;
\item $(x^{2} \circ y) \circ x = x^{2} \circ (y \circ x)$.
\end{enumerate}}
\end{defn}
\begin{defn}\cite{M2}
\it{A Hom-Jordan algebra over a field $\rm{F}$ is a triple $(V, \mu, \alpha)$ consisting of a linear space $V$, a bilinear map $\mu : V \times V \rightarrow V$ which is commutative and a linear map $\alpha : V \rightarrow V$ satisfying for any $x, y \in V$,
\[\mu(\alpha^{2}(x), \mu(y, \mu(x, x))) = \mu(\mu(\alpha(x), y), \alpha(\mu(x, x))),\]
where $\alpha^{2} = \alpha \circ \alpha$.}
\end{defn}
\begin{defn}
\it{A Hom-Jordan algebra $(V, \mu, \alpha)$ is called multiplicative if for any $x, y \in V$, $\alpha(\mu(x, y)) = \mu(\alpha(x), \alpha(y))$.}
\end{defn}
\begin{defn}\cite{B1}
\it{A subspace $W \subseteq V$ is a Hom-subalgebra of $(V, \mu, \alpha)$ if $\alpha(W) \subseteq W$ and
\[\mu(x, y) \in W,\quad\forall x, y \in W.\]}
\end{defn}
\begin{defn}\cite{B1}
\it{A subspace $W \subseteq V$ is a Hom-ideal of $(V, \mu, \alpha)$ if $\alpha(W) \subseteq W$ and
\[\mu(x, y) \in W,\quad\forall x \in W, y \in V.\]}
\end{defn}
\begin{defn}\cite{B1}
\it{Let $(V, \mu, \alpha)$ and $(V^{'}, \mu^{'}, \beta)$ be two Hom-Jordan algebras. A linear map $\phi : V \rightarrow V^{'}$ is said to be a homomorphism of Hom-Jordan algebras if
\begin{enumerate}[(1)]
\item $\phi(\mu(x, y)) = \mu^{'}(\phi(x), \phi(y))$;
\item $\phi \circ \alpha = \beta \circ \phi$.
\end{enumerate}
In particular, $\phi$ is an isomorphism if $\phi$ is bijective.}
\end{defn}
\begin{defn}
\it{A Hom-Jordan algebra $(V, \mu, \alpha)$ is called a Jordan-type Hom-Jordan algebra if there exists a Jordan algebra $(V, \mu^{'})$ such that
\[\mu(x, y) = \alpha(\mu^{'}(x, y)) = \mu^{'}(\alpha(x), \alpha(y)),\quad\forall x, y \in V,\]
and $(V, \mu^{'})$ is called the induced Jordan algebra.}
\end{defn}
\begin{lem}\label{le:2.8}
\begin{enumerate}[(1)]
\item Suppose that $(V, \mu)$ is a Jordan algebra and $\alpha : V \rightarrow V$ is a homomorphism. Then $(V, \tilde{\mu}, \alpha)$ is a multiplicative Hom-Jordan algebra with $\tilde{\mu}(x, y) = \alpha(\mu(x, y)),\;\forall x, y \in V$.
\item Suppose that $(V, \mu, \alpha)$ is a multiplicative Hom-Jordan algebra and $\alpha$ is invertible. Then $(V, \mu, \alpha)$ is a Jordan-type Hom-Jordan algebra and its induced Jordan algebra is $(V, \mu^{'})$ with $\mu^{'}(x, y) = \alpha^{-1}(\mu(x, y)),\;\forall x, y \in V$.
\end{enumerate}
\end{lem}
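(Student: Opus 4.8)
The plan is to handle the two parts separately, in each case reducing a twisted identity to the ordinary Jordan identity by exploiting the homomorphism property of $\alpha$.

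For part (1), I first note that $\tilde\mu$ is automatically commutative, since $\tilde\mu(x,y)=\alpha(\mu(x,y))=\alpha(\mu(y,x))=\tilde\mu(y,x)$ by commutativity of $\mu$. Multiplicativity is equally quick: $\alpha(\tilde\mu(x,y))=\alpha^2(\mu(x,y))$, while $\tilde\mu(\alpha(x),\alpha(y))=\alpha(\mu(\alpha(x),\alpha(y)))=\alpha(\alpha(\mu(x,y)))=\alpha^2(\mu(x,y))$, using that $\alpha$ is a homomorphism of $(V,\mu)$. The substantive step is the Hom-Jordan identity. Writing $\mu(a,b)=ab$ for the Jordan product, I would expand both sides of $\tilde\mu(\alpha^2(x),\tilde\mu(y,\tilde\mu(x,x)))=\tilde\mu(\tilde\mu(\alpha(x),y),\alpha(\tilde\mu(x,x)))$ by repeatedly applying the defining formula $\tilde\mu=\alpha\circ\mu$ and pushing every occurrence of $\alpha$ through products, which is legitimate because $\alpha$ is a homomorphism. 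Setting $u=\alpha^2(x)$ and $v=\alpha(y)$, the left side collapses to $\alpha(u(vu^2))$ and the right side to $\alpha((uv)u^2)$. Thus it remains to check $u(vu^2)=(uv)u^2$ in $(V,\mu)$; by commutativity this is exactly $(u^2v)u=u^2(vu)$, which is the Jordan identity for the element $u$. This finishes part (1).

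For part (2), set $\mu'(x,y)=\alpha^{-1}(\mu(x,y))$. The two identities making $(V,\mu,\alpha)$ Jordan-type are immediate: $\alpha(\mu'(x,y))=\mu(x,y)$ by construction, and $\mu'(\alpha(x),\alpha(y))=\alpha^{-1}(\mu(\alpha(x),\alpha(y)))=\alpha^{-1}(\alpha(\mu(x,y)))=\mu(x,y)$ using multiplicativity. So everything reduces to proving that $(V,\mu')$ is genuinely a Jordan algebra. Commutativity of $\mu'$ is inherited from $\mu$ as before. For the Jordan identity I would first record that $\alpha^{-1}$ is itself multiplicative, i.e.\ $\alpha^{-1}(\mu(a,b))=\mu(\alpha^{-1}(a),\alpha^{-1}(b))$, which follows by applying $\alpha$ to both sides. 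Expanding the desired identity $\mu'(\mu'(\mu'(x,x),y),x)=\mu'(\mu'(x,x),\mu'(y,x))$ and clearing the outermost $\alpha^{-1}$, I am reduced to an equation purely in $\mu$ and $\alpha^{-1}$.

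The key manoeuvre is then a change of variables: using multiplicativity of $\alpha^{-1}$ I would rewrite the left-hand side as $\mu(\alpha^2(w),\mu(\alpha^{-1}(y),\mu(w,w)))$ with $w=\alpha^{-2}(x)$, which is precisely the left-hand side of the Hom-Jordan identity evaluated at the pair $(w,\alpha^{-1}(y))$. Applying that identity converts it to $\mu(\mu(\alpha(w),\alpha^{-1}(y)),\alpha(\mu(w,w)))$, and since $\alpha(w)=\alpha^{-1}(x)$ and $\alpha(\mu(w,w))=\mu(\alpha^{-1}(x),\alpha^{-1}(x))$, this equals $\mu(\mu(\alpha^{-1}(x),\alpha^{-1}(y)),\mu(\alpha^{-1}(x),\alpha^{-1}(x)))$; a final application of commutativity matches it with the similarly expanded right-hand side. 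The main obstacle, in both parts, is bookkeeping: one must track the iterated $\alpha$'s correctly and choose the substitution $x\mapsto\alpha^{-2}(x)$, $y\mapsto\alpha^{-1}(y)$ so that the twisted identity lines up exactly with the untwisted one. Once that alignment is spotted, both verifications become mechanical.
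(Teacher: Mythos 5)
Your proposal is correct and follows essentially the same route as the paper's proof: in both parts you reduce the twisted identity to the untwisted one by pushing powers of $\alpha$ (respectively $\alpha^{-1}$, after noting it is also multiplicative) through products, which is exactly the paper's computation. The only cosmetic difference is where the powers of $\alpha$ are factored out --- the paper pushes everything inward to $\alpha^{3}(x)$, $\alpha^{2}(y)$ in part (1) and factors out $\alpha^{-3}$ in part (2), while you keep one $\alpha$ outside with $u=\alpha^{2}(x)$, $v=\alpha(y)$ and substitute $w=\alpha^{-2}(x)$, $z=\alpha^{-1}(y)$ --- which changes nothing of substance.
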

\begin{proof}
(1). We have $\tilde{\mu}$ is commutative since $\mu$ is commutative.

For all $x, y \in V$, we have
\begin{align*}
&\tilde{\mu}(\alpha^{2}(x), \tilde{\mu}(y, \tilde{\mu}(x,x))) = \alpha(\mu(\alpha^{2}(x), \alpha(\mu(y, \alpha(\mu(x, x))))))\\
&= \mu(\alpha^{3}(x), \mu(\alpha^{2}(y), \mu(\alpha^{3}(x), \alpha^{3}(x)))) = \mu(\mu(\alpha^{3}(x), \alpha^{2}(y)), \mu(\alpha^{3}(x), \alpha^{3}(x)))\\
&= \alpha(\mu(\alpha(\mu(\alpha(x), y)), \alpha^{2}(\mu(x, x)))) = \tilde{\mu}(\tilde{\mu}(\alpha(x), y), \alpha(\tilde{\mu}(x, x))),
\end{align*}
which implies that $(V, \tilde{\mu}, \alpha)$ is a Hom-Jordan algebra.
\[\alpha(\tilde{\mu}(x, y)) = \alpha^{2}(\mu(x, y)) = \alpha(\mu(\alpha(x), \alpha(y))) = \tilde{\mu}(\alpha(x), \alpha(y)),\]
which implies that $(V, \tilde{\mu}, \alpha)$ is multiplicative. Hence, $(V, \tilde{\mu}, \alpha)$ is a multiplicative Hom-Jordan algebra.

(2). We have $\mu^{'}$ is commutative since $\mu$ is commutative.

For any $x, y \in V$, we have
\begin{align*}
&\mu^{'}(\mu^{'}(\mu^{'}(x, x), y), x) = \alpha^{-1}(\mu(\alpha^{-1}(\mu(\alpha^{-1}(\mu(x, x)), y)), x))\\
&= \alpha^{-3}(\mu(\mu(\mu(x, x), \alpha(y)), \alpha^{2}(x))) = \alpha^{-3}(\mu(\alpha(\mu(x, x)), \mu(\alpha(y), \alpha(x))))\\
&= \alpha^{-1}(\mu(\alpha^{-1}(\mu(x, x)), \alpha^{-1}(\mu(y, x)))) = \mu^{'}(\mu^{'}(x, x), \mu^{'}(y, x)),
\end{align*}
which implies that $(V, \mu^{'})$ is a Jordan algebra.

It's obvious that $\mu(x, y) = \alpha(\mu^{'}(x, y)) = \mu^{'}(\alpha(x), \alpha(y))$ for any $x, y \in V$. Hence, $(V, \mu, \alpha)$ is a Jordan-type Hom-Jordan algebra.
\end{proof}
\begin{defn}
\it{Suppose that $(V, \mu, \alpha)$ is a Hom-Jordan algebra. Define its derived sequence as follow:
\[V^{(1)} = \mu(V, V),\;V^{(2)} = \mu(V^{(1)}, V^{(1)}),\cdots,\;V^{(k)} = \mu(V^{(k - 1)}, V^{(k - 1)}),\cdots.\]
If there exists $m \in \mathbb{Z}^{+}$ such that $V^{(m)} = 0$, then $(V, \mu, \alpha)$ is called solvable.}
\end{defn}
\begin{defn}
\it{Suppose that $(V, \mu, \alpha)$ is a Hom-Jordan algebra and $\alpha \neq 0$. If $(V, \mu, \alpha)$ has no non trivial Hom-ideals and satisfies $\mu(V, V) = V$, then $(V, \mu, \alpha)$ is called simple. If
\[V = V_{1} \oplus V_{2} \oplus \cdots \oplus V_{s},\]
where $V_{i}(1 \leq i \leq s)$ are simple Hom-ideals of $(V, \mu, \alpha)$, then $(V, \mu, \alpha)$ is called semi-simple.}
\end{defn}
\begin{prop}\label{prop:2.11}
Suppose that $(V_{1}, \tilde{\mu_{1}}, \alpha)$ and $(V_{2}, \tilde{\mu_{2}}, \beta)$ are two Jordan-type Hom-Jordan algebras and $\beta$ is injective. Then $\phi$ is an isomorphism from $(V_{1}, \tilde{\mu_{1}}, \alpha)$ to $(V_{2}, \tilde{\mu_{2}}, \beta)$ if and only if $\phi$ is an isomorphism between their induced Jordan algebras $(V_{1}, \mu_{1})$ and $(V_{2}, \mu_{2})$ and $\phi$ satisfies $\beta \circ \phi = \phi \circ \alpha$.
\end{prop}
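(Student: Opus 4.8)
The plan is to unwind both notions of isomorphism and reduce the whole statement to the single product-preservation identity, exploiting the defining relation $\tilde{\mu_i}(x,y) = \alpha_i(\mu_i(x,y))$ of a Jordan-type Hom-Jordan algebra (write $\alpha_1 = \alpha$, $\alpha_2 = \beta$). Both notions of isomorphism share two ingredients: $\phi$ is a linear bijection, and the twist-compatibility $\phi \circ \alpha = \beta \circ \phi$ holds. For a Hom-Jordan isomorphism this compatibility is part of the definition of homomorphism; for the induced-Jordan version it is imposed explicitly in the statement. Thus in either direction I may freely assume $\phi \circ \alpha = \beta \circ \phi$, and the entire content is the equivalence of the two product-preservation conditions
\[\phi(\tilde{\mu_1}(x,y)) = \tilde{\mu_2}(\phi(x), \phi(y)) \quad\text{and}\quad \phi(\mu_1(x,y)) = \mu_2(\phi(x), \phi(y)).\]

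For the direction assuming the induced-Jordan isomorphism, I would compute directly. Starting from $\phi(\tilde{\mu_1}(x,y)) = \phi(\alpha(\mu_1(x,y)))$, I push $\phi$ through $\alpha$ using $\phi \circ \alpha = \beta \circ \phi$ to obtain $\beta(\phi(\mu_1(x,y)))$, then apply the product-preservation of $\phi$ on the induced Jordan algebras to reach $\beta(\mu_2(\phi(x),\phi(y))) = \tilde{\mu_2}(\phi(x),\phi(y))$. This chain needs no hypothesis on $\beta$ beyond its being the twist of the second structure, so this implication is unconditional.

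For the converse, starting from a Hom-Jordan isomorphism, the same factorization gives $\beta(\phi(\mu_1(x,y))) = \phi(\tilde{\mu_1}(x,y)) = \tilde{\mu_2}(\phi(x),\phi(y)) = \beta(\mu_2(\phi(x),\phi(y)))$, so that $\beta$ applied to $\phi(\mu_1(x,y))$ and to $\mu_2(\phi(x),\phi(y))$ coincide. This is exactly where the injectivity of $\beta$ enters: it lets me cancel $\beta$ and conclude $\phi(\mu_1(x,y)) = \mu_2(\phi(x),\phi(y))$, i.e. that $\phi$ preserves the induced Jordan products. The bijectivity of $\phi$ and the twist-compatibility transfer verbatim between the two settings, so nothing further is required.

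The only genuine obstacle is this cancellation of $\beta$, and it is precisely the reason the hypothesis that $\beta$ is injective appears; without it the forward implication can fail, since $\tilde{\mu_2}$ detects $\mu_2$ only through $\beta$. Everything else is routine bookkeeping with the two definitions.
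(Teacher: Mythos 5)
Your proposal is correct and follows essentially the same route as the paper: both directions rest on the identity $\phi(\tilde{\mu_1}(x,y)) = \phi(\alpha(\mu_1(x,y))) = \beta(\phi(\mu_1(x,y)))$ via the twist-compatibility, with injectivity of $\beta$ used exactly where the paper uses it, to cancel $\beta$ in the forward direction. Your framing (reducing everything to the equivalence of the two product-preservation conditions) is just a cleaner packaging of the paper's two computations, not a different argument.
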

\begin{proof}
$(\Rightarrow)$ For any $x, y \in V_{1}$, we have
\[\phi(\tilde{\mu_{1}}(x, y)) = \tilde{\mu_{2}}(\phi(x), \phi(y)),\]
i.e.,
\[\phi(\alpha(\mu_{1}(x, y))) = \beta(\mu_{2}(\phi(x), \phi(y))).\]
Note that $\phi \circ \alpha = \beta \circ \phi$, we have
\[\beta(\phi(\mu_{1}(x, y))) = \beta(\mu_{2}(\phi(x), \phi(y))).\]
Since $\beta$ is injective, we have
\[\phi(\mu_{1}(x, y)) = \mu_{2}(\phi(x), \phi(y)),\]
which implies that $\phi$ is an isomorphism from $(V_{1}, \mu_{1})$ to $(V_{2}, \mu_{2})$.

$(\Leftarrow)$ For any $x, y \in V_{1}$, we have
\begin{align*}
&\phi(\tilde{\mu_{1}}(x, y)) = \phi(\alpha(\mu_{1}(x, y))) = \beta(\phi(\mu_{1}(x, y)))\\
&= \beta(\mu_{2}(\phi(x), \phi(y))) = \tilde{\mu_{2}}(\phi(x), \phi(y)),
\end{align*}
note that $\beta \circ \phi = \phi \circ \alpha$, we have $\phi$ is an isomorphism from $(V_{1}, \tilde{\mu_{1}}, \alpha)$ to $(V_{2}, \tilde{\mu_{2}}, \beta)$.
\end{proof}
\begin{lem}\label{le:2.12}
Simple multiplicative Hom-Jordan algebras with $\alpha \neq 0$ are Jordan-type Hom-Jordan algebras.
\end{lem}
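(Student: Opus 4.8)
The plan is to reduce everything to the invertibility of $\alpha$ and then invoke Lemma \ref{le:2.8}(2). Indeed, if I can show that $\alpha$ is bijective, then by Lemma \ref{le:2.8}(2) the triple $(V, \mu, \alpha)$ is automatically a Jordan-type Hom-Jordan algebra, with induced Jordan algebra given by $\mu^{'}(x, y) = \alpha^{-1}(\mu(x, y))$. So the entire content of the lemma is the claim that a nonzero $\alpha$ on a simple multiplicative Hom-Jordan algebra must be invertible, and I will obtain this by exhibiting $\ker\alpha$ and $\mathrm{Im}\,\alpha$ as Hom-ideals and then applying simplicity.

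First I would prove injectivity. The subspace $\ker\alpha$ satisfies $\alpha(\ker\alpha) = 0 \subseteq \ker\alpha$, and for $x \in \ker\alpha$, $y \in V$ multiplicativity gives $\alpha(\mu(x, y)) = \mu(\alpha(x), \alpha(y)) = \mu(0, \alpha(y)) = 0$, so $\mu(x, y) \in \ker\alpha$; hence $\ker\alpha$ is a Hom-ideal. Since $(V, \mu, \alpha)$ is simple, $\ker\alpha$ is either $0$ or $V$, and $\ker\alpha = V$ would force $\alpha = 0$, contrary to hypothesis. Therefore $\ker\alpha = 0$ and $\alpha$ is injective.

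Next I would prove surjectivity by showing that $\mathrm{Im}\,\alpha = \alpha(V)$ is a Hom-ideal. It is nonzero because $\alpha \neq 0$, and it is $\alpha$-invariant since $\alpha(\alpha(V)) \subseteq \alpha(V)$; moreover, multiplicativity together with $\mu(V, V) = V$ (from simplicity) yields $\alpha(V) = \alpha(\mu(V, V)) = \mu(\alpha(V), \alpha(V))$, so $\alpha(V)$ is at least a Hom-subalgebra. The crux is the absorption property $\mu(\alpha(V), V) \subseteq \alpha(V)$. I expect this to be the main obstacle: unlike the kernel computation, it does not follow from multiplicativity alone, and to handle a product $\mu(\alpha(a), \mu(b, c))$ (which suffices, since $V = \mu(V, V)$) I would feed it into the linearized Hom-Jordan identity in order to rewrite it as $\alpha$ applied to an element of $V$. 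Once $\mu(\alpha(V), V) \subseteq \alpha(V)$ is established, simplicity forces the nonzero Hom-ideal $\alpha(V)$ to equal $V$, so $\alpha$ is surjective.

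Combining the two steps, $\alpha$ is bijective, and Lemma \ref{le:2.8}(2) finishes the proof. One shortcut is worth recording: in the finite-dimensional setting that underlies the classification results of Sections \ref{se:3}--\ref{se:4}, injectivity of $\alpha$ already implies surjectivity, so the delicate absorption computation can be bypassed and only the easy kernel argument is genuinely needed; the general argument via $\mathrm{Im}\,\alpha$ being a Hom-ideal is the conceptually cleaner route but is exactly where the real work lies.
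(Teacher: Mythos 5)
Your kernel argument is exactly the paper's proof: show $\mathrm{Ker}(\alpha)$ is a Hom-ideal via multiplicativity, use simplicity plus $\alpha \neq 0$ to conclude $\mathrm{Ker}(\alpha) = 0$, and then pass from injectivity to invertibility so that Lemma \ref{le:2.8}(2) applies. The paper in fact does nothing more than this: its proof reads ``if $\alpha$ is not invertible, then $\mathrm{Ker}(\alpha) \neq 0$,'' i.e.\ it silently identifies injectivity with invertibility (legitimate in the finite-dimensional setting that the classification results of Sections \ref{se:3}--\ref{se:4} live in, but not in general). So the ``shortcut'' you record at the end \emph{is} the paper's argument, and on that route your proposal is complete and correct.

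The part of your proposal that goes beyond the paper --- establishing surjectivity directly by showing $\mathrm{Im}(\alpha)$ is a Hom-ideal --- is not actually carried out, and you should not present it as merely deferred ``real work.'' Multiplicativity only gives $\mu(\alpha(V), \alpha(V)) = \alpha(\mu(V,V)) \subseteq \alpha(V)$, i.e.\ that $\mathrm{Im}(\alpha)$ is a Hom-subalgebra; the absorption property $\mu(\alpha(V), V) \subseteq \alpha(V)$ does not follow from this, and it is far from clear that feeding $\mu(\alpha(a), \mu(b,c))$ into the (linearized) Hom-Jordan identity can rewrite such a product as $\alpha$ of something: that identity relates products of the special form $\mu(\alpha^{2}(x), \mu(y, \mu(x,x)))$ and $\mu(\mu(\alpha(x), y), \alpha(\mu(x,x)))$, and nothing in it produces a global factorization through $\alpha$. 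If your intended proof of the lemma is the general (dimension-free) one, this step is a genuine gap, and I do not see how to close it; absent an argument, one cannot even rule out that the lemma fails for infinite-dimensional simple multiplicative Hom-Jordan algebras with $\alpha$ injective but not surjective. The safe conclusion is the one both you and the paper reach: in finite dimension, the kernel computation alone suffices.
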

\begin{proof}
Suppose that $(V, \mu, \alpha)$ is a simple multiplicative Hom-Jordan algebra. According to Lemma \ref{le:2.8} (2), we only need to show that $\alpha$ is invertible. If $\alpha$ is not invertible, then $Ker(\alpha) \neq 0$. It's obvious that $\alpha(Ker(\alpha)) \subseteq Ker(\alpha)$. For any $x \in Ker(\alpha)$, $y \in V$, we have
\[\alpha(\mu(x, y)) = \mu(\alpha(x), \alpha(y)) = \mu(0, \alpha(y)) = 0,\]
which implies that $\mu(Ker(\alpha), V) \subseteq Ker(\alpha)$. Then $Ker(\alpha)$ is a non trivial Hom-ideal of $(V, \mu, \alpha)$, contradicting with $(V, \mu, \alpha)$ is simple. Therefore, $Ker(\alpha) = 0$, i.e., $\alpha$ is invertible. Hence, $(V, \mu, \alpha)$ is a Jordan-type Hom-Jordan algebra.
\end{proof}
Now we recall a corollary about Proposition \ref{prop:2.11} using Lemma \ref{le:2.12}.
\begin{cor}\label{cor:2.13}
Two simple multiplicative Hom-Jordan algebras $(V_{1}, \tilde{\mu_{1}}, \alpha)$ and $(V_{2}, \tilde{\mu_{2}}, \beta)$ are isomorphic if and only if there exists an isomorphism $\phi$ between their induced Jordan algebras $(V_{1}, \mu_{1})$ and $(V_{2}, \mu_{2})$ and $\phi$ satisfies $\beta \circ \phi = \phi \circ \alpha$.
\end{cor}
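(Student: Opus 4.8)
The plan is to reduce the statement directly to Proposition \ref{prop:2.11} by verifying that its hypotheses hold, the only substantive input being the invertibility of the twisting maps, which is supplied by Lemma \ref{le:2.12}.

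First I would observe that since $(V_{1}, \tilde{\mu_{1}}, \alpha)$ and $(V_{2}, \tilde{\mu_{2}}, \beta)$ are simple, the definition of simplicity forces $\alpha \neq 0$ and $\beta \neq 0$. Hence Lemma \ref{le:2.12} applies to each, so both are Jordan-type Hom-Jordan algebras with induced Jordan algebras $(V_{1}, \mu_{1})$ and $(V_{2}, \mu_{2})$. More importantly, the argument inside Lemma \ref{le:2.12} shows that the twisting map is in fact invertible: if $\beta$ were not injective, then $Ker(\beta)$ would be a nonzero subspace with $\beta(Ker(\beta)) \subseteq Ker(\beta)$ and $\mu(Ker(\beta), V_{2}) \subseteq Ker(\beta)$, i.e. a nontrivial Hom-ideal, contradicting simplicity. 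In particular $\beta$ is injective, which is precisely the hypothesis required to invoke Proposition \ref{prop:2.11}.

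With both algebras recognised as Jordan-type and with $\beta$ injective, Proposition \ref{prop:2.11} applies verbatim to any given linear map $\phi$: it is an isomorphism of Hom-Jordan algebras if and only if it is an isomorphism of the induced Jordan algebras $(V_{1}, \mu_{1})$ and $(V_{2}, \mu_{2})$ satisfying $\beta \circ \phi = \phi \circ \alpha$. Quantifying over $\phi$, the two Hom-Jordan algebras are isomorphic exactly when such a $\phi$ exists, which is the claimed equivalence.

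The only step requiring care — and the place where a reader will want explicit justification — is the passage from simplicity to injectivity of $\beta$, since the definition of simple only asserts $\beta \neq 0$ and does not by itself yield invertibility; this relies on the kernel computation inside Lemma \ref{le:2.12}. Everything else is a direct citation, so I would keep the write-up short: apply Lemma \ref{le:2.12} to obtain Jordan-type structures with $\alpha$ and $\beta$ invertible, then apply Proposition \ref{prop:2.11}.
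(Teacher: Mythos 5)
Your proposal is correct and matches the paper's own route exactly: the paper derives this corollary by combining Lemma \ref{le:2.12} (simplicity forces the twisting maps to be invertible, hence the algebras are Jordan-type) with Proposition \ref{prop:2.11}, which is precisely your argument. Your explicit note that injectivity of $\beta$ comes from the kernel computation inside Lemma \ref{le:2.12} is a welcome clarification of a step the paper leaves implicit.
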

\section{Structure of Multiplicative Hom-Jordan algebras}\label{se:3}
In this section, we discuss the sufficient and necessary conditions that multiplicative Hom-Jordan algebras are solvable, simple and semi-simple.
\begin{prop}\label{prop:3.1}
Suppose that $(V, \mu, \alpha)$ is a multiplicative Hom-Jordan algebra and $I$ is a Hom-ideal of $(V, \mu, \alpha)$. Then $(V/I, \bar{\mu}, \bar{\alpha})$ is a multiplicative Hom-Jordan algebra where $\bar{\mu}(\bar{x}, \bar{y}) = \overline{\mu(x, y)},\;\bar{\alpha}(\bar{x}) = \overline{\alpha(x)}$ for all $\bar{x}, \bar{y} \in V/I$.
\end{prop}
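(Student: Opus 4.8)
The plan is to exhibit the natural projection $\pi : V \to V/I$, $\pi(x) = \bar{x}$, as a surjective homomorphism and then transport each defining identity of a multiplicative Hom-Jordan algebra from $V$ down to $V/I$. Before anything else I would check that $\bar{\mu}$ and $\bar{\alpha}$ are well defined on cosets. For $\bar{\alpha}$ this uses $\alpha(I) \subseteq I$, which is built into the definition of a Hom-ideal: if $\bar{x} = \bar{x'}$, then $x - x' \in I$, so $\alpha(x) - \alpha(x') = \alpha(x - x') \in I$, giving $\overline{\alpha(x)} = \overline{\alpha(x')}$. For $\bar{\mu}$ I would use bilinearity together with $\mu(I, V) \subseteq I$ and, by commutativity, $\mu(V, I) \subseteq I$: writing $x' = x + a$, $y' = y + b$ with $a, b \in I$, the difference $\mu(x', y') - \mu(x, y) = \mu(x, b) + \mu(a, y) + \mu(a, b)$ lies in $I$, so $\bar{\mu}$ does not depend on the chosen representatives. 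This well-definedness step is where the Hom-ideal hypothesis is essential, and it is really the only place any thought is required.

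Once the operations are well defined, commutativity of $\bar{\mu}$ and linearity of $\bar{\alpha}$ are inherited immediately from the corresponding properties of $\mu$ and $\alpha$. By construction $\pi$ satisfies $\pi(\mu(x, y)) = \bar{\mu}(\pi(x), \pi(y))$ and $\pi \circ \alpha = \bar{\alpha} \circ \pi$, so $\pi$ is a surjective homomorphism. To obtain the Hom-Jordan identity in $V/I$, I would take arbitrary $\bar{x}, \bar{y} \in V/I$, lift them to $x, y \in V$ (possible since $\pi$ is surjective), apply $\pi$ to the identity
\[\mu(\alpha^{2}(x), \mu(y, \mu(x, x))) = \mu(\mu(\alpha(x), y), \alpha(\mu(x, x)))\]
that holds in $V$, and repeatedly use that $\pi$ commutes with $\mu$ and $\alpha$. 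The left side becomes $\bar{\mu}(\bar{\alpha}^{2}(\bar{x}), \bar{\mu}(\bar{y}, \bar{\mu}(\bar{x}, \bar{x})))$ and the right side becomes $\bar{\mu}(\bar{\mu}(\bar{\alpha}(\bar{x}), \bar{y}), \bar{\alpha}(\bar{\mu}(\bar{x}, \bar{x})))$, so the identity transfers verbatim to $V/I$.

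Finally, for multiplicativity I would apply $\pi$ to $\alpha(\mu(x, y)) = \mu(\alpha(x), \alpha(y))$, obtaining $\bar{\alpha}(\bar{\mu}(\bar{x}, \bar{y})) = \bar{\mu}(\bar{\alpha}(\bar{x}), \bar{\alpha}(\bar{y}))$ for all $\bar{x}, \bar{y} \in V/I$. Since every axiom of a multiplicative Hom-Jordan algebra has now been verified on $V/I$, the proof would be complete. I do not expect any genuine obstacle here: the argument is the standard pattern in which identities descend along a surjective homomorphism, and the only substantive point is the well-definedness of the quotient operations, which is forced precisely by the two Hom-ideal conditions $\alpha(I) \subseteq I$ and $\mu(I, V) \subseteq I$.
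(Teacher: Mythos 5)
Your proposal is correct and follows essentially the same route as the paper: both transfer the commutativity, Hom-Jordan identity, and multiplicativity from $V$ to $V/I$ along the canonical projection. The only difference is that you explicitly verify well-definedness of $\bar{\mu}$ and $\bar{\alpha}$ from the Hom-ideal conditions $\alpha(I) \subseteq I$ and $\mu(I, V) \subseteq I$, a step the paper's proof leaves implicit; this is a welcome addition but not a different argument.
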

\begin{proof}
We have $\bar{\mu}$ is commutative since $\mu$ is commutative.

For any $\bar{x}, \bar{y} \in V/I$, we have
\begin{align*}
&\bar{\mu}(\bar{\alpha}^{2}(\bar{x}), \bar{\mu}(\bar{y}, \bar{\mu}(\bar{x}, \bar{x}))) = \overline{\mu(\alpha^{2}(x), \mu(y, \mu(x, x)))}\\
&= \overline{\mu(\mu(\alpha(x), y), \alpha(\mu(x, x)))} = \bar{\mu}(\bar{\mu}(\bar{\alpha}(\bar{x}), \bar{y}), \bar{\alpha}(\bar{\mu}(\bar{x}, \bar{x}))).
\end{align*}
Hence, $(V/I, \bar{\mu}, \bar{\alpha})$ is a Hom-Jordan algebra.
\[\bar{\alpha}(\bar{\mu}(\bar{x}, \bar{y})) = \overline{\alpha(\mu(x, y))} = \overline{\mu(\alpha(x), \alpha(y))} = \bar{\mu}(\bar{\alpha}(\bar{x}, \bar{y})),\]
which implies that $(V/I, \bar{\mu}, \bar{\alpha})$ is multiplicative.
\end{proof}
\begin{cor}\label{cor:3.2}
Suppose that $(V, \mu, \alpha)$ is a multiplicative Hom-Jordan algebra and satisfies $\alpha^{2} = \alpha$. Then $(V/Ker(\alpha), \bar{\mu}, \bar{\alpha})$ is a Jordan-type Hom-Jordan algebra.
\end{cor}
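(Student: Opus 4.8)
The plan is to realize $V/Ker(\alpha)$ as a multiplicative Hom-Jordan algebra on which the twisting map becomes invertible, and then invoke Lemma \ref{le:2.8}(2) to upgrade it to Jordan-type. This is the natural route because the only tool we have for producing a Jordan-type Hom-Jordan algebra out of a multiplicative one is precisely the invertibility criterion of Lemma \ref{le:2.8}(2).

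First I would verify that $Ker(\alpha)$ is genuinely a Hom-ideal, so that the quotient makes sense and Proposition \ref{prop:3.1} applies. This repeats the computation in Lemma \ref{le:2.12}: since $\alpha(Ker(\alpha)) = 0 \subseteq Ker(\alpha)$ and, for $x \in Ker(\alpha)$ and $y \in V$, multiplicativity gives $\alpha(\mu(x, y)) = \mu(\alpha(x), \alpha(y)) = \mu(0, \alpha(y)) = 0$, we conclude $\mu(Ker(\alpha), V) \subseteq Ker(\alpha)$. With this recorded, Proposition \ref{prop:3.1} immediately yields that $(V/Ker(\alpha), \bar{\mu}, \bar{\alpha})$ is a multiplicative Hom-Jordan algebra.

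The heart of the argument is to show that $\bar{\alpha}$ is invertible, and here the hypothesis $\alpha^{2} = \alpha$ does all the work; in fact I expect $\bar{\alpha}$ to collapse to the identity map. For any $y \in V$, idempotency gives $\alpha(\alpha(y) - y) = \alpha^{2}(y) - \alpha(y) = 0$, so $\alpha(y) - y \in Ker(\alpha)$ and therefore $\bar{\alpha}(\bar{y}) = \overline{\alpha(y)} = \bar{y}$. Thus $\bar{\alpha} = \mathrm{id}_{V/Ker(\alpha)}$, which is trivially bijective. One can alternatively argue injectivity directly: $\bar{\alpha}(\bar{x}) = 0$ means $\alpha(x) \in Ker(\alpha)$, i.e.\ $\alpha^{2}(x) = 0$, whence $\alpha(x) = 0$ by $\alpha^{2} = \alpha$, so $\bar{x} = 0$.

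Finally, with $(V/Ker(\alpha), \bar{\mu}, \bar{\alpha})$ a multiplicative Hom-Jordan algebra and $\bar{\alpha}$ invertible, Lemma \ref{le:2.8}(2) applies verbatim and produces the induced Jordan algebra with product $\mu^{'}(\bar{x}, \bar{y}) = \bar{\alpha}^{-1}(\bar{\mu}(\bar{x}, \bar{y})) = \bar{\mu}(\bar{x}, \bar{y})$, confirming that the quotient is Jordan-type. The only place demanding attention is the invertibility of $\bar{\alpha}$, but since $\alpha^{2} = \alpha$ forces $\bar{\alpha}$ to be the identity, this presents no real obstacle once the Hom-ideal structure of $Ker(\alpha)$ has been established; everything else is a direct appeal to results already proved.
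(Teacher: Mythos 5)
Your proposal is correct and follows essentially the same route as the paper: establish that $Ker(\alpha)$ is a Hom-ideal via the computation from Lemma \ref{le:2.12}, invoke Proposition \ref{prop:3.1} to make the quotient a multiplicative Hom-Jordan algebra, show $\bar{\alpha}$ is invertible, and finish with Lemma \ref{le:2.8}(2). The one place you improve on the paper is the invertibility step: the paper only proves $\bar{\alpha}$ is \emph{injective} (exactly your alternative argument) and then asserts invertibility, which strictly speaking needs finite-dimensionality or a separate surjectivity argument, whereas your observation that $\alpha^{2} = \alpha$ forces $\bar{\alpha} = \mathrm{id}_{V/Ker(\alpha)}$ gives bijectivity outright in any dimension.
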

\begin{proof}
If $\alpha$ is invertible, $Ker(\alpha) = 0$. According to Lemma \ref{le:2.8} (2), the conclusion is valid. If $\alpha$ isn't invertible, according to the proof of Lemma \ref{le:2.12}, we have $Ker(\alpha)$ is a Hom-ideal of $(V, \mu, \alpha)$. Then we have $(V/Ker(\alpha), \bar{\mu}, \bar{\alpha})$ is a multiplicative Hom-Jordan algebra according to Proposition \ref{prop:3.1}. Now we show that $\bar{\alpha}$ is invertible on $V/Ker(\alpha)$.

Assume that $\bar{x} \in Ker(\bar{\alpha})$. Then we have $\overline{\alpha(x)} = \bar{\alpha}(\bar{x}) = \bar{0}$, i.e., $\alpha(x) \in Ker(\alpha)$. Note that $\alpha^{2} = \alpha$, we have
\[\alpha(x) = \alpha^{2}(x) = \alpha(\alpha(x)) = 0,\]
which implies that $x \in Ker(\alpha)$, i.e., $\bar{x} = \bar{0}$. Hence, $\bar{\alpha}$ is invertible. According to Lemma \ref{le:2.8} (2), $(V/Ker(\alpha), \bar{\mu}, \bar{\alpha})$ is a Jordan-type Hom-Jordan algebra.
\end{proof}
\begin{thm}\label{thm:3.3}
Suppose that $(V, \mu, \alpha)$ is a multiplicative Hom-Jordan algebra and $\alpha$ is invertible. Then $(V, \mu, \alpha)$ is solvable if and only if its induced Jordan algebra $(V, \mu^{'})$ is solvable.
\end{thm}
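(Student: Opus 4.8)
The plan is to exploit the Jordan-type structure guaranteed by Lemma~\ref{le:2.8}(2). Since $\alpha$ is invertible, that lemma tells us $(V, \mu, \alpha)$ is Jordan-type with induced product $\mu'(x, y) = \alpha^{-1}(\mu(x, y))$, so that
\[
\mu(x, y) = \alpha(\mu'(x, y)) = \mu'(\alpha(x), \alpha(y)), \qquad \forall x, y \in V.
\]
The crucial observation I would record first is that $\alpha$ is an \emph{automorphism} of the induced Jordan algebra $(V, \mu')$: the second equality above reads $\alpha(\mu'(x, y)) = \mu'(\alpha(x), \alpha(y))$, and $\alpha$ is bijective, so in particular $\alpha(V) = V$.

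Next I would compare the two derived sequences directly, writing $V^{(k)}$ for the derived sequence of $(V, \mu, \alpha)$ and $W^{(k)}$ for that of the induced Jordan algebra $(V, \mu')$ (so $W^{(1)} = \mu'(V, V)$ and $W^{(k)} = \mu'(W^{(k-1)}, W^{(k-1)})$). The engine of the argument is the following claim, to be proved for any $\alpha$-invariant subspace $A \subseteq V$ (i.e. $\alpha(A) = A$): one has $\mu(A, A) = \mu'(A, A)$, and $\mu(A, A)$ is again $\alpha$-invariant. Indeed
\[
\mu(A, A) = \alpha(\mu'(A, A)) = \mu'(\alpha(A), \alpha(A)) = \mu'(A, A),
\]
using $\mu = \alpha \circ \mu'$, then that $\alpha$ is a Jordan automorphism, then $\alpha(A) = A$; the same computation with $A$ replaced by $\mu'(A, A)$ shows the output is $\alpha$-invariant.

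With this claim in hand I would finish by induction on $k$. Since $\alpha$ is invertible, $V$ itself is $\alpha$-invariant, so the claim applied repeatedly yields both $V^{(k)} = W^{(k)}$ and the $\alpha$-invariance of each $V^{(k)}$, for every $k$. Consequently $V^{(m)} = 0$ for some $m$ if and only if $W^{(m)} = 0$ for some $m$, which is exactly the asserted equivalence of solvability, with both implications coming out simultaneously. I do not expect a genuine obstacle here; the only thing to watch is the interaction of the two distinct products under $\alpha$. A more naive induction gives only $V^{(k)} = \alpha^{k}(W^{(k)})$, which already suffices since $\alpha^{k}$ is invertible; the point that makes the statement clean is that invertibility of $\alpha$ forces every $W^{(k)}$ to be $\alpha$-invariant, collapsing these powers of $\alpha$ and identifying the two derived series outright.
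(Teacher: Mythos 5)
Your proof is correct, and it is a genuine refinement of the paper's argument rather than a copy of it. The paper's proof is exactly what you call the ``more naive induction'': writing $\tilde{V}^{(k)}$ for the derived series of $(V,\mu,\alpha)$ and $W^{(k)}$ for that of $(V,\mu')$, it shows by induction that $\tilde{V}^{(k)} = \alpha^{k}(W^{(k)})$ and then uses invertibility of $\alpha$ (and hence of $\alpha^{k}$) to transfer vanishing in both directions. Your main line instead isolates the invariance claim --- for $\alpha(A)=A$ one has $\mu(A,A) = \alpha(\mu'(A,A)) = \mu'(\alpha(A),\alpha(A)) = \mu'(A,A)$, which is simultaneously $\alpha$-invariant --- and inducts with it, concluding that the two derived series coincide term by term as subspaces. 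Each step of your argument checks out: $\alpha \in \mathrm{Aut}(V,\mu')$ follows from Lemma~\ref{le:2.8}(2) exactly as you say, and the induction is sound since $V$ itself is $\alpha$-invariant. What your route buys is a sharper structural conclusion (equality of the derived series, with every term $\alpha$-invariant), from which the equivalence of solvability is immediate with no bookkeeping of powers of $\alpha$; what the paper's route buys is brevity, since it never needs the invariance observation. Both are complete proofs of the theorem.
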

\begin{proof}
Denote derived series of $(V, \mu^{'})$ and $(V, \mu, \alpha)$ by $V^{(i)}$, $\tilde{V}^{(i)}(i = 1, 2, \cdots)$ respectively.

Suppose that $(V, \mu^{'})$ is solvable. Then there exists $m \in \mathbb{Z}^{+}$ such that $V^{(m)} = 0$.

Note that
\[\tilde{V}^{(1)} = \mu(V, V) = \alpha(\mu^{'}(V, V)) = \alpha(V^{(1)}),\]
\[\tilde{V}^{(2)} = \mu(\tilde{V}^{(1)}, \tilde{V}^{(1)}) = \mu(\alpha(V^{(1)}), \alpha(V^{(1)})) = \alpha^{2}(\mu^{'}(V^{(1)}, V^{(1)})) = \alpha^{2}(V^{(2)}),\]
we have $\tilde{V}^{(m)} = \alpha^{m}(V^{(m)})$ by induction. Hence, $\tilde{V}^{(m)} = 0$, i.e., $(V, \mu, \alpha)$ is solvable.

On the other hand, assume that $(V, \mu, \alpha)$ is solvable. Then there exists $m \in \mathbb{Z}^{+}$ such that $\tilde{V}^{(m)} = 0$. We have $\tilde{V}^{(m)} = \alpha^{m}(V^{(m)})$ by the above proof. Hence we have $V^{(m)} = 0$ since $\alpha$ is invertible. Therefore, $(V, \mu^{'})$ is solvable.
\end{proof}
\begin{lem}\label{le:3.4}
Suppose that an algebra $\mathcal{A}$ over $\rm{F}$ can be decomposed into the unique direct sum of simple ideals $\mathcal{A} = \oplus^{s}_{i = 1}\mathcal{A}_{i}$ where $\mathcal{A}_{i}$ aren't isomorphic to each other and $\alpha \in Aut(\mathcal{A})$. Then $\alpha(\mathcal{A}_{i}) = \mathcal{A}_{i}(1 \leq i \leq s)$.
\end{lem}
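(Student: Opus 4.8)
The plan is to exploit that an automorphism carries a simple ideal to a simple ideal, then to show that every simple ideal of $\mathcal{A}$ must coincide with one of the summands $\mathcal{A}_{j}$; the non-isomorphism hypothesis will finally force $\alpha(\mathcal{A}_{i})$ back onto $\mathcal{A}_{i}$.

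First I would record a few easy structural facts. Since $\alpha$ is bijective and multiplicative, $\alpha(\mathcal{A}) = \mathcal{A}$, and for any ideal $I$ of $\mathcal{A}$ one has $\alpha(I)\cdot\mathcal{A} = \alpha(I)\cdot\alpha(\mathcal{A}) = \alpha(I\cdot\mathcal{A}) \subseteq \alpha(I)$, so $\alpha(I)$ is again an ideal. In particular each $\alpha(\mathcal{A}_{i})$ is an ideal, and since $\alpha$ restricts to an algebra isomorphism on $\mathcal{A}_{i}$, the image $\alpha(\mathcal{A}_{i})$ is simple and isomorphic to $\mathcal{A}_{i}$. I would also note that, because the summands are ideals in direct sum, $\mathcal{A}_{k}\cdot\mathcal{A}_{l} \subseteq \mathcal{A}_{k}\cap\mathcal{A}_{l} = 0$ whenever $k \neq l$.

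The key step is the claim that every simple ideal $\mathcal{B}$ of $\mathcal{A}$ equals some $\mathcal{A}_{j}$. To prove it, fix $j$ and observe that $\mathcal{B}\cdot\mathcal{A}_{j} \subseteq \mathcal{B}\cap\mathcal{A}_{j}$, which is an ideal of $\mathcal{A}$ lying inside the simple ideal $\mathcal{B}$; hence $\mathcal{B}\cap\mathcal{A}_{j}$ is either $0$ or $\mathcal{B}$. If $\mathcal{B}\cap\mathcal{A}_{j} = \mathcal{B}$ for some $j$, then $\mathcal{B} \subseteq \mathcal{A}_{j}$ is a nonzero ideal of the simple algebra $\mathcal{A}_{j}$, so $\mathcal{B} = \mathcal{A}_{j}$ and the claim holds. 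Otherwise $\mathcal{B}\cap\mathcal{A}_{j} = 0$ for every $j$, whence $\mathcal{B}\cdot\mathcal{A} = \sum_{j}\mathcal{B}\cdot\mathcal{A}_{j} = 0$, and in particular $\mathcal{B}\cdot\mathcal{B} = 0$, contradicting $\mathcal{B}\cdot\mathcal{B} = \mathcal{B} \neq 0$ for the simple algebra $\mathcal{B}$.

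Applying the claim to $\mathcal{B} = \alpha(\mathcal{A}_{i})$ yields $\alpha(\mathcal{A}_{i}) = \mathcal{A}_{\sigma(i)}$ for some index $\sigma(i)$. Since $\alpha$ restricts to an isomorphism from $\mathcal{A}_{i}$ onto $\mathcal{A}_{\sigma(i)}$ and the summands are pairwise non-isomorphic, we must have $\sigma(i) = i$, i.e.\ $\alpha(\mathcal{A}_{i}) = \mathcal{A}_{i}$ for all $i$. I expect the main obstacle to be the dichotomy in the key step, specifically the use of $\mathcal{B}\cdot\mathcal{B} = \mathcal{B}$ for a simple algebra (equivalently, that simplicity rules out the trivial-multiplication case $\mathcal{B}\cdot\mathcal{B} = 0$); this is precisely what excludes the degenerate possibility that $\mathcal{B}$ meets every summand trivially. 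Note that the uniqueness of the decomposition is not actually needed as an input here—it is a consequence of the claim—so the genuine hypotheses in play are simplicity of the summands and their pairwise non-isomorphism.
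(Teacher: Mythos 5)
Your proof is correct, but it takes a genuinely different route from the paper's at the key step. The paper applies $\alpha$ to the given decomposition to obtain a second decomposition $\mathcal{A} = \oplus^{s}_{i=1}\alpha(\mathcal{A}_{i})$ into simple ideals, and then invokes the \emph{uniqueness} hypothesis to conclude that each $\alpha(\mathcal{A}_{i})$ coincides with some $\mathcal{A}_{j}$; you instead prove outright that \emph{every} simple ideal $\mathcal{B}$ of $\mathcal{A}$ equals some $\mathcal{A}_{j}$, via the dichotomy $\mathcal{B}\cap\mathcal{A}_{j}\in\{0,\mathcal{B}\}$ together with the observation that $\mathcal{B}\cdot\mathcal{B}=\mathcal{B}\neq 0$ rules out $\mathcal{B}$ meeting all summands trivially. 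Both arguments finish identically, using pairwise non-isomorphism of the summands to force $\alpha(\mathcal{A}_{i})=\mathcal{A}_{i}$. What your route buys is that the uniqueness of the decomposition becomes a consequence rather than a hypothesis (as you correctly note), so the lemma holds under formally weaker assumptions and the proof is self-contained; what it relies on is the convention that a simple algebra satisfies $\mathcal{B}\cdot\mathcal{B}=\mathcal{B}$, i.e.\ that trivial multiplication is excluded, which is consistent with this paper's definition of simplicity (it requires $\mu(V,V)=V$) and with standard usage for Jordan algebras. One small repair: when you check that $\alpha(I)$ is an ideal you verify only $\alpha(I)\cdot\mathcal{A}\subseteq\alpha(I)$; for a general, possibly non-commutative, algebra you should also record the symmetric inclusion $\mathcal{A}\cdot\alpha(I)\subseteq\alpha(I)$, exactly as the paper does with its ``similarly'' --- this is immediate and does not affect the argument.
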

\begin{proof}
For any $1 \leq i \leq s$, we have
\[\alpha(\mathcal{A}_{i})\mathcal{A} = \alpha(\mathcal{A}_{i})\alpha(\mathcal{A}) = \alpha(\mathcal{A}_{i}\mathcal{A}) \subseteq \alpha(\mathcal{A}_{i})\]
since $\mathcal{A}_{i}$ are ideals of $\mathcal{A}$. Similarly, we have $\mathcal{A}\alpha(\mathcal{A}_{i}) \subseteq \alpha(\mathcal{A}_{i})$. Hence, $\alpha(\mathcal{A}_{i})$ are also ideals of $\mathcal{A}$. Moreover, $\alpha(\mathcal{A}_{i})$ are simple since $\mathcal{A}_{i}$ are simple.

Note that $\mathcal{A} = \oplus^{s}_{i = 1}\mathcal{A}_{i}$, we have
\[\mathcal{A} = \alpha(\mathcal{A}) = \alpha(\oplus^{s}_{i = 1}\mathcal{A}_{i}) = \oplus^{s}_{i = 1}\alpha(\mathcal{A}_{i}).\]

Note that the decomposition is unique, there exists $1 \leq j \leq s$ such that $\alpha(\mathcal{A}_{i}) = \mathcal{A}_{j}$ for any $1 \leq i \leq s$.

If $j \neq i$, then we have
\[\mathcal{A}_{i} \cong \alpha(\mathcal{A}_{i}) = \mathcal{A}_{j},\]
contradicting with the assumption that $\mathcal{A}_{i}$ aren't isomorphic to each other.
Hence, we have $\alpha(\mathcal{A}_{i}) = \mathcal{A}_{i}(1 \leq i \leq s)$ for any $s \in \mathbb{N}$.
\end{proof}
\begin{thm}\label{thm:3.5}
\begin{enumerate}[(1)]
\item Suppose that $(V, \mu, \alpha)$ is a simple multiplicative Hom-Jordan algebra. Then its induced Jordan algebra $(V, \mu^{'})$ is semi-simple. Moreover, $(V, \mu^{'})$ can be decomposed into direct sum of isomorphic simple ideals, in addition, $\alpha$ acts simply transitively on simple ideals of the induced Jordan algebra.
\item Suppose that $(V, \mu^{'})$ is a simple Jordan algebra and $\alpha \in Aut(V)$. Define $\mu : V \times V \rightarrow V$ by
\[\mu(x, y) = \alpha(\mu^{'}(x, y)),\quad\forall x, y \in V,\]
then $(V, \mu, \alpha)$ is a simple multiplicative Hom-Jordan algebra.
\end{enumerate}
\end{thm}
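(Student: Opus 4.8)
The plan is to handle the two parts separately, using Lemma \ref{le:2.12} throughout part (1) to pass to the induced Jordan algebra. First I would invoke Lemma \ref{le:2.12} to conclude that $\alpha$ is invertible and that $(V, \mu, \alpha)$ is of Jordan type, so the induced Jordan algebra $(V, \mu^{'})$ with $\mu^{'}(x, y) = \alpha^{-1}(\mu(x, y))$ is available. A one-line computation with multiplicativity, $\mu^{'}(\alpha(x), \alpha(y)) = \alpha^{-1}(\mu(\alpha(x), \alpha(y))) = \alpha^{-1}(\alpha(\mu(x, y))) = \alpha(\mu^{'}(x, y))$, shows that $\alpha \in Aut(V, \mu^{'})$. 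Next I would prove $(V, \mu^{'})$ is semi-simple by forcing its radical $R$ to vanish. Since $R$ is a characteristic ideal it satisfies $\alpha(R) = R$, and then $\mu(R, V) = \alpha(\mu^{'}(R, V)) \subseteq \alpha(R) = R$ shows $R$ is a Hom-ideal of $(V, \mu, \alpha)$. By simplicity $R = 0$ or $R = V$; the case $R = V$ would make $(V, \mu^{'})$ solvable, hence $(V, \mu, \alpha)$ solvable by Theorem \ref{thm:3.3}, contradicting $\mu(V, V) = V$. Thus $R = 0$, and standard Jordan structure theory gives a unique decomposition $V = \mathcal{A}_{1} \oplus \cdots \oplus \mathcal{A}_{s}$ into simple ideals.

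I would then analyse the action of $\alpha$ on this decomposition. Exactly as in the proof of Lemma \ref{le:3.4}, each $\alpha(\mathcal{A}_{i})$ is again a simple ideal, so by uniqueness of the decomposition $\alpha$ induces a permutation $\sigma$ of $\{\mathcal{A}_{1}, \ldots, \mathcal{A}_{s}\}$. The key point is that $\sigma$ must be a single $s$-cycle. For any $\sigma$-orbit $O$, the subspace $W = \bigoplus_{i \in O} \mathcal{A}_{i}$ is an $\alpha$-invariant ideal of $(V, \mu^{'})$, whence $\mu(W, V) = \alpha(\mu^{'}(W, V)) \subseteq \alpha(W) = W$ exhibits $W$ as a nonzero Hom-ideal; simplicity forces $W = V$, so there is exactly one orbit. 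Consequently the $\mathcal{A}_{i}$ are carried onto one another by powers of $\alpha$, so they are pairwise isomorphic, and the cyclic group $\langle \alpha \rangle$ acts simply transitively on them, which is the remaining assertion.

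For part (2), Lemma \ref{le:2.8}(1) immediately gives that $(V, \mu, \alpha)$ is a multiplicative Hom-Jordan algebra, since $\alpha \in Aut(V)$ is in particular a homomorphism; it remains to verify simplicity. As $(V, \mu^{'})$ is simple we have $\mu^{'}(V, V) = V$, so $\mu(V, V) = \alpha(\mu^{'}(V, V)) = \alpha(V) = V$, and $\alpha \neq 0$. For the ideal condition, let $I \neq 0$ be a Hom-ideal; from $\alpha(I) \subseteq I$ together with invertibility of $\alpha$ in finite dimension I obtain $\alpha(I) = I$, and then $\mu(I, V) \subseteq I$ translates into $\mu^{'}(I, V) \subseteq \alpha^{-1}(I) = I$, so that $I$ is an ideal of the simple algebra $(V, \mu^{'})$. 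Hence $I = V$, and $(V, \mu, \alpha)$ is simple.

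The main obstacle is the transitivity step in part (1): proving that $\sigma$ is a single cycle. This is precisely where simplicity of the Hom-Jordan algebra enters in an essential way, via the observation that every $\alpha$-invariant sub-sum of the simple ideals is automatically a Hom-ideal. A secondary point requiring care is the passage $\alpha(I) \subseteq I \Rightarrow \alpha(I) = I$ in part (2), which uses finite-dimensionality; without it one only gets $\mu^{'}(I, V) \subseteq \alpha^{-1}(I)$, which need not be contained in $I$.
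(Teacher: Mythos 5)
Your proof is correct and shares the paper's overall skeleton: in part (1) you kill the radical by showing it is an $\alpha$-invariant Hom-ideal and using Theorem \ref{thm:3.3} together with $\mu(V,V)=V$, and in part (2) you reduce Hom-ideals of $(V,\mu,\alpha)$ to ideals of $(V,\mu^{'})$ via Lemma \ref{le:2.8}(1), exactly as the paper does. Where you genuinely diverge is the transitivity step. The paper first groups the simple ideals of $(V,\mu^{'})$ into isotypic blocks, invokes Lemma \ref{le:3.4} to show $\alpha$ preserves each block, uses simplicity to kill all but one block (so the simple ideals are pairwise isomorphic), and only then argues that $\alpha$ fixes no single ideal and that $V_{11}\oplus\alpha(V_{11})\oplus\cdots\oplus\alpha^{m_{1}-1}(V_{11})$ is a Hom-ideal, hence all of $V$. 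You instead observe that $\alpha$ permutes the simple ideals (by uniqueness of the decomposition, which is the first half of the proof of Lemma \ref{le:3.4}) and that the sum over any orbit of this permutation is an $\alpha$-invariant ideal of $(V,\mu^{'})$, hence a Hom-ideal, so simplicity forces a single orbit; this yields pairwise isomorphism and simple transitivity in one stroke and bypasses Lemma \ref{le:3.4}'s non-isomorphy hypothesis and the isotypic bookkeeping entirely. Your version is more economical; the paper's version makes the isotypic decomposition explicit, which matches the way Theorem \ref{thm:3.5}(1) is later quoted. One further point in your favor: in part (2) the paper asserts $\alpha^{-1}(V_{1})=V_{1}$ without comment, whereas you correctly flag that $\alpha(I)\subseteq I$ upgrades to $\alpha(I)=I$ only using finite-dimensionality (or some substitute); that is precisely the hidden hypothesis needed there, so your explicit treatment is sounder than the paper's.
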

\begin{proof}
(1) According to the proof of Lemma \ref{le:2.8} (2) and Lemma \ref{le:2.12}, $\alpha$ is an automorphism both on $(V, \mu, \alpha)$ and $(V, \mu^{'})$.

Suppose that $V_{1}$ is the maximal solvable ideal of $(V, \mu^{'})$. Then there exists $m \in \mathbb{Z^{+}}$ such that $V_{1}^{(m)} = 0$.

Note that
\[\mu^{'}(\alpha(V_{1}), V) = \mu^{'}(\alpha(V_{1}), \alpha(V)) = \alpha(\mu^{'}(V_{1}, V)) \subseteq \alpha(V_{1}),\]
\[(\alpha(V_{1}))^{(m)} = \alpha(V_{1}^{(m)}) = 0,\]
we have $\alpha(V_{1})$ is also a solvable ideal of $(V, \mu^{'})$. Then we have $\alpha(V_{1}) \subseteq V_{1}$. Moreover,
\[\mu(V_{1}, V) = \alpha(\mu^{'}(V_{1}, V)) \subseteq \alpha(V_{1}) \subseteq V_{1},\]
so $V_{1}$ is a Hom-ideal of $(V, \mu, \alpha)$. Then we have $V_{1} = 0$ or $V_{1} = V$ since $(V, \mu, \alpha)$ is simple. If $V_{1} = V$, according to the proof of Theorem \ref{thm:3.3}, we have
\[\tilde{V}^{(m)} = \alpha^{m}(V^{(m)}) = \alpha^{m}(V_{1}^{(m)}) = 0,\]
on the other hand, $V = \mu(V, V)$ since $(V, \mu, \alpha)$ is simple. Then we have $\tilde{V}^{(m)} = V$. Contradiction. Hence, $V_{1} = 0$. Therefore, $(V, \mu^{'})$ is semi-simple.

Since $(V, \mu^{'})$ is semi-simple, we have $V = \oplus^s_{i = 1}V_{i}$, where $V_{i}(1 \leq i \leq s)$ are simple ideals of $(V, \mu^{'})$. Because there may be isomorphic Jordan algebras in $V_{1}, V_{2}, \cdots, V_{s}$, we rearrange the order as following
\[V = V_{11} \oplus V_{12} \oplus \cdots \oplus V_{1m_{1}} \oplus V_{21} \oplus V_{22} \oplus \cdots \oplus V_{2m_{2}} \oplus \cdots \oplus V_{t1} \oplus V_{t2} \oplus \cdots \oplus V_{tm_{t}},\]
where
\[(V_{ij}, \mu^{'}) \cong (V_{ik}, \mu^{'}),\quad 1 \leq j, k \leq m_{i}, i = 1, 2, \cdots, t.\]
According to Lemma \ref{le:3.4}, we have
\[\alpha(V_{i1} \oplus V_{i2} \oplus \cdots \oplus V_{im_{i}}) = V_{i1} \oplus V_{i2} \oplus \cdots \oplus V_{im_{i}},\]
\begin{align*}
&\mu(V_{i1} \oplus V_{i2} \oplus \cdots \oplus V_{im_{i}}, V) = \alpha(\mu^{'}(V_{i1} \oplus V_{i2} \oplus \cdots \oplus V_{im_{i}}, V))\\
&\subseteq \alpha(V_{i1} \oplus V_{i2} \oplus \cdots \oplus V_{im_{i}}) = V_{i1} \oplus V_{i2} \oplus \cdots \oplus V_{im_{i}},
\end{align*}
we have $V_{i1} \oplus V_{i2} \oplus \cdots \oplus V_{im_{i}}$ are Hom-ideals of $(V, \mu, \alpha)$. Since $(V, \mu, \alpha)$ is simple, we have $V_{i1} \oplus V_{i2} \oplus \cdots \oplus V_{im_{i}} = 0$ or $V$. So all but one $V_{i1} \oplus V_{i2} \oplus \cdots \oplus V_{im_{i}}$ must be $0$. Without loss of generality, we can assume
\[V = V_{11} \oplus V_{12} \oplus \cdots \oplus V_{1m_{1}}.\]
When $m_{1} = 1$, $(V, \mu^{'})$ is simple. When $m_{1} > 1$, if
\[\alpha(V_{1p}) = V_{1p}(1 \leq p \leq m_{1}),\]
then $V_{1p}$ is a non trivial ideal of $(V, \mu, \alpha)$, which contradicts with the fact that $(V, \mu, \alpha)$ is simple. Hence,
\[\alpha(V_{1p}) = V_{1l}(1 \leq l \neq p \leq m_{1}).\]
In addition, it is easy to show that $V_{11} \oplus \alpha(V_{11}) \oplus \cdots \oplus \alpha^{m_{1} - 1}(V_{11})$ is a Hom-ideal of $(V, \mu, \alpha)$. Therefore,
\[V = V_{11} \oplus \alpha(V_{11}) \oplus \cdots \oplus \alpha^{m_{1} - 1}(V_{11}).\]
That is $\alpha$ acts simply transitively on simple ideals of the induced Jordan algebra.

(2) Suppose that $(V, \mu^{'})$ is a simple Jordan algebra. According to Lemma \ref{le:2.8} (1), we have $(V, \mu, \alpha)$ is a multiplicative Hom-Jordan algebra. Suppose that $V_{1}$ is a non trivial Hom-ideal of $(V, \mu, \alpha)$, then we have
\[\mu^{'}(V_{1}, V) = \alpha^{-1}(\mu(V_{1}, V)) \subseteq \alpha^{-1}(V_{1}) = V_{1}.\]
So $V_{1}$ is a non trivial ideal of $(V, \mu^{'})$, contradiction. So $(V, \mu, \alpha)$ has no non trivial ideal. If $\mu(V, V) \subsetneqq V$, then
\[\mu^{'}(V, V) = \alpha^{-1}(\mu(V, V)) \subsetneqq \alpha^{-1}(V) = V,\]
contradicting with $(V, \mu^{'})$ is a simple Jordan algebra. Hence, $(V, \mu, \alpha)$ is simple.
\end{proof}
\begin{thm}\label{thm:3.6}
\begin{enumerate}[(1)]
\item Suppose that $(V, \mu, \alpha)$ is a semi-simple multiplicative Hom-Jordan algebra. Then $(V, \mu, \alpha)$ is a Jordan-type Hom-Jordan algebra and its induced Jordan algebra $(V, \mu^{'})$ is also semi-simple.
\item Suppose that $(V, \mu^{'})$ is a semi-simple Jordan algebra and has the decomposition $V = \oplus^{s}_{i = 1}V_{i}$ where $V_{i}(1 \leq i \leq s)$ are simple ideals of $(V, \mu^{'})$. $\alpha \in Aut(V)$ satisfies $\alpha(V_{i}) = V_{i}(1 \leq i \leq s)$. Then $(V, \mu, \alpha)$ is a semi-simple multiplicative Hom-Jordan algebra and has the unique decomposition.
\end{enumerate}
\end{thm}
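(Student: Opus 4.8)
The plan is to reduce both parts to the simple case already settled in Theorem \ref{thm:3.5}, handling the direct summands one at a time; the only genuinely new work is the uniqueness assertion in part (2).

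For part (1), I would start from the defining decomposition $V = V_1 \oplus \cdots \oplus V_s$ into simple Hom-ideals. Since each $(V_i, \mu|_{V_i}, \alpha|_{V_i})$ is a simple multiplicative Hom-Jordan algebra, Lemma \ref{le:2.12} together with the invertibility of $\alpha$ on each $V_i$ coming from its proof shows that $\alpha|_{V_i}$ is invertible; as $\alpha(V_i) \subseteq V_i$, the map $\alpha$ is invertible on all of $V$, and Lemma \ref{le:2.8}(2) gives that $(V, \mu, \alpha)$ is Jordan-type with $\mu'(x,y) = \alpha^{-1}(\mu(x,y))$. To see that $(V, \mu')$ is semi-simple, I would note $\alpha^{-1}(V_i) = V_i$, so $\mu'$ restricts to each $V_i$, while commutativity and the Hom-ideal property give $\mu(V_i, V_j) \subseteq V_i \cap V_j = 0$ and hence $\mu'(V_i, V_j) = 0$ for $i \neq j$. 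Thus $(V, \mu')$ is the direct sum of the induced Jordan algebras $(V_i, \mu'|_{V_i})$, each semi-simple by Theorem \ref{thm:3.5}(1); a direct sum of semi-simple Jordan algebras is semi-simple, finishing part (1).

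For part (2), Lemma \ref{le:2.8}(1) immediately makes $(V, \mu, \alpha)$ a multiplicative Hom-Jordan algebra. Using $\alpha(V_i) = V_i$, I would check that each $V_i$ is a Hom-ideal (for $x \in V_i$, $y \in V$ one has $\mu(x,y) = \alpha(\mu'(x,y)) \in \alpha(V_i) = V_i$), and that $(V_i, \mu|_{V_i}, \alpha|_{V_i})$ is simple by Theorem \ref{thm:3.5}(2), since $\alpha|_{V_i} \in \mathrm{Aut}(V_i)$. Therefore $V = \oplus_{i=1}^s V_i$ exhibits $(V, \mu, \alpha)$ as semi-simple.

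The delicate point, and the one I expect to require the most care, is uniqueness. The key reduction is that, $\alpha$ being an automorphism, the Hom-ideals of $(V, \mu, \alpha)$ coincide with the $\alpha$-invariant ideals of $(V, \mu')$: an $\alpha$-invariant Jordan ideal $W$ satisfies $\mu(W, V) = \alpha(\mu'(W, V)) \subseteq \alpha(W) = W$, and conversely a Hom-ideal $W$ satisfies $\mu'(W, V) = \alpha^{-1}(\mu(W, V)) \subseteq \alpha^{-1}(W) = W$ (here $\alpha(W) = W$ because $\alpha$ is an invertible endomorphism preserving the finite-dimensional subspace $W$). Now let $V = \oplus_j W_j$ be any decomposition into simple Hom-ideals. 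Each $W_j$ is then an $\alpha$-invariant ideal of the semi-simple Jordan algebra $(V, \mu')$, hence a direct sum of some of the intrinsically determined minimal ideals $V_i$. Applying Theorem \ref{thm:3.5}(1) to the simple Hom-Jordan algebra $W_j$, its induced Jordan algebra is a direct sum of simple ideals on which $\alpha$ acts transitively; these simple ideals are among the $V_i$, each of which is $\alpha$-fixed by hypothesis. Transitivity of $\alpha$ on a set of $\alpha$-fixed ideals forces that set to be a singleton, so each $W_j$ equals a single $V_i$, which proves the decomposition is unique. The one subtlety to nail down carefully is the standard structural fact that ideals of the semi-simple Jordan algebra $(V, \mu')$ decompose as sums of its simple components and that these components are intrinsic (the minimal ideals), since this is exactly what lets me match the simple pieces of each $W_j$ with the prescribed $V_i$.
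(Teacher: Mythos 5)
Your proof is correct, and for the two semi-simplicity claims it follows essentially the same route as the paper: in part (1), invertibility of $\alpha$ on each simple Hom-ideal (via the proof of Lemma \ref{le:2.12}), then Lemma \ref{le:2.8}(2), then Theorem \ref{thm:3.5}(1) applied summand by summand; in part (2), Lemma \ref{le:2.8}(1) together with the computation $\mu(V_{i}, V) = \alpha(\mu^{'}(V_{i}, V)) \subseteq V_{i}$. Two differences are worth recording. First, to get simplicity of each $(V_{i}, \mu|_{V_{i}}, \alpha|_{V_{i}})$ in part (2) you invoke Theorem \ref{thm:3.5}(2) directly, whereas the paper reruns a contradiction argument (a non-trivial Hom-ideal $V_{i0} \subsetneqq V_{i}$ would be a non-trivial ideal of $(V_{i}, \mu^{'})$); your route is cleaner and also yields $\mu(V_{i}, V_{i}) = V_{i}$ for free, a condition in the paper's definition of simplicity that its own argument never explicitly checks. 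Second, and more substantially, the paper offers no argument whatsoever for the uniqueness assertion --- its proof simply ends with ``and has the unique decomposition'' --- while you supply one: Hom-ideals of $(V, \mu, \alpha)$ coincide with $\alpha$-invariant ideals of $(V, \mu^{'})$ (using finite dimension to upgrade $\alpha(W) \subseteq W$ to $\alpha(W) = W$), so any simple Hom-ideal $W_{j}$ in a competing decomposition is a sum of the intrinsic simple components $V_{i}$, and the simple transitivity of $\alpha$ on the simple ideals of the induced Jordan algebra of $W_{j}$ (Theorem \ref{thm:3.5}(1)), combined with the hypothesis $\alpha(V_{i}) = V_{i}$, forces $W_{j}$ to equal a single $V_{i}$. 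This argument is sound modulo the standard structural fact you flag --- every ideal of a finite-dimensional semi-simple Jordan algebra is a sum of its simple components, a fact the paper itself leans on in Lemma \ref{le:3.4} --- so your proposal does not merely reproduce the paper's proof but actually closes a genuine gap in it.
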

\begin{proof}
(1) According to the assumption, $(V, \mu, \alpha)$ has the decomposition $V = \oplus^{s}_{i = 1}V_{i}$ where $V_{i}(1 \leq i \leq s)$ are simple Hom-ideals of $(V, \mu, \alpha)$. Then $(V_{i}, \mu, \alpha|_{V_{i}})(1 \leq i \leq s)$ are simple Hom-Jordan algebras. According to the proof of Lemma \ref{le:2.12}, $\alpha|_{V_{i}}(1 \leq i \leq s)$ are invertible. Therefore, $\alpha$ is invertible on $V$. According to Lemma \ref{le:2.8} (2), $(V, \mu, \alpha)$ is a Jordan-type Hom-Jordan algebra and its induced Jordan algebra is $(V, \mu^{'})$ where $\mu^{'}(x, y) = \alpha^{-1}(\mu(x, y))$ for all $x, y \in V$.

According to the proof of Theorem \ref{thm:3.5} (2), $V_{i}(i = 1, 2, \cdots, s)$ are ideals of $(V, \mu^{'})$. Moreover, $(V_{i}, \mu^{'})$ are induced Jordan algebras of simple Hom-Jordan algebras $(V_{i}, \mu, \alpha|_{V_{i}})$ respectively. According to Theorem \ref{thm:3.5} (1), $(V_{i}, \mu^{'})$ are semi-simple Jordan algebras and can be decomposed into direct sum of isomorphic simple ideals $V_{i} = V_{i1} \oplus V_{i2} \oplus \cdots \oplus V_{im_{i}}$. Therefore, $(V, \mu^{'})$ is semi-simple and has the decomposition of direct sum of simple ideals
\[V = V_{11} \oplus V_{12} \oplus \cdots \oplus V_{1m_{1}} \oplus V_{21} \oplus V_{22} \oplus \cdots \oplus V_{2m_{2}} \oplus \cdots \oplus V_{s1} \oplus V_{s2} \oplus \cdots \oplus V_{sm_{s}}.\]

(2) According to Lemma \ref{le:2.8} (1), $(V, \mu, \alpha)$ is a multiplicative Hom-Jordan algebra.

For all $1 \leq i \leq s$, we have
\[\mu(V_{i}, V) = \alpha(\mu^{'}(V_{i}, V)) \subseteq \alpha(V_{i}) = V_{i},\]
note that $\alpha(V_{i}) = V_{i}$, we have $V_{i}$ are Hom-ideals of $(V, \mu, \alpha)$.

If there exists $V_{i0} \subsetneqq V_{i}$ is a non trivial Hom-ideal of $(V_{i}, \mu, \alpha|_{V_{i}})$, then we have
\[\mu(V_{i0}, V) = \mu(V_{i0}, V_{1} \oplus V_{2} \oplus \cdots \oplus V_{s}) = \mu(V_{i0}, V_{i}) \subseteq V_{i0},\]
so $V_{i0}$ is a non trivial Hom-ideal of $(V, \mu, \alpha)$. According to the proof of Theorem \ref{thm:3.5} (2), $V_{i0}$ is also a non trivial ideal of $(V, \mu^{'})$. Hence, $V_{i0}$ is also a non trivial ideal of $(V_{i}, \mu^{'})$. Contradiction. Hence, $V_{i}(i = 1, 2, \cdots, s)$ are simple Hom-ideals of $(V, \mu, \alpha)$. Therefore, $(V, \mu, \alpha)$ is semi-simple and has the unique decomposition.
\end{proof}
\begin{prop}
Suppose that $(V, \mu, \alpha)$ is a multiplicative Hom-Jordan algebra satisfying $\alpha^{2} = \alpha$ and $\mu(Im(\alpha), V) \subseteq Im(\alpha)$. Then $(V, \mu, \alpha)$ is isomorphic to the decomposition of direct sum of Hom-Jordan algebras
\[V \cong (V/Ker(\alpha)) \oplus Ker(\alpha).\]
\end{prop}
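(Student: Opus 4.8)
The plan is to exploit the idempotency $\alpha^{2} = \alpha$ to split $V$ as a vector space into $Im(\alpha)$ and $Ker(\alpha)$, to show that each summand is a Hom-ideal annihilating the other, and then to identify $Im(\alpha)$ with the quotient $V/Ker(\alpha)$. First I would record the elementary fact that an idempotent endomorphism gives $V = Im(\alpha) \oplus Ker(\alpha)$: every $x$ decomposes as $x = \alpha(x) + (x - \alpha(x))$ with $\alpha(x) \in Im(\alpha)$ and $x - \alpha(x) \in Ker(\alpha)$, since $\alpha(x - \alpha(x)) = \alpha(x) - \alpha^{2}(x) = 0$; and if $y = \alpha(z) \in Im(\alpha) \cap Ker(\alpha)$ then $y = \alpha(z) = \alpha^{2}(z) = \alpha(y) = 0$. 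In particular $\alpha$ restricts to the identity on $Im(\alpha)$ and to $0$ on $Ker(\alpha)$. Next I would check that both summands are Hom-ideals: $Ker(\alpha)$ is a Hom-ideal by exactly the computation in the proof of Lemma \ref{le:2.12}, while $Im(\alpha)$ is a Hom-ideal because the hypothesis $\mu(Im(\alpha), V) \subseteq Im(\alpha)$ together with commutativity of $\mu$ yields the ideal condition, and $\alpha(Im(\alpha)) \subseteq Im(\alpha)$ is automatic.

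The decisive observation is that these two ideals have trivial product. For $x \in Im(\alpha)$ and $y \in Ker(\alpha)$, the element $\mu(x, y)$ lies in $Im(\alpha)$ (because $Im(\alpha)$ is a Hom-ideal) and simultaneously in $Ker(\alpha)$ (because $Ker(\alpha)$ is a Hom-ideal), whence $\mu(x, y) \in Im(\alpha) \cap Ker(\alpha) = 0$. Thus $V = Im(\alpha) \oplus Ker(\alpha)$ is a direct sum of the two Hom-subalgebras $(Im(\alpha), \mu, \alpha|_{Im(\alpha)})$ and $(Ker(\alpha), \mu, \alpha|_{Ker(\alpha)})$ in the sense of Hom-Jordan algebras; here one checks that the restricted products land inside each summand and that $(Ker(\alpha), \mu, 0)$ satisfies the Hom-Jordan identity trivially since $\alpha|_{Ker(\alpha)} = 0$.

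Finally I would produce the isomorphism $Im(\alpha) \cong V/Ker(\alpha)$. The natural candidate is the restriction $\phi$ to $Im(\alpha)$ of the quotient map $V \rightarrow V/Ker(\alpha)$, namely $\phi(x) = \bar{x}$. Because $V = Im(\alpha) \oplus Ker(\alpha)$, this $\phi$ is a linear bijection; it respects products since $\phi(\mu(x, y)) = \overline{\mu(x, y)} = \bar{\mu}(\bar{x}, \bar{y})$, and it intertwines the twisting maps since $\phi(\alpha(x)) = \overline{\alpha(x)} = \bar{\alpha}(\bar{x})$. By Corollary \ref{cor:3.2}, $(V/Ker(\alpha), \bar{\mu}, \bar{\alpha})$ is a Jordan-type Hom-Jordan algebra, so $\phi$ is an isomorphism of Hom-Jordan algebras; combining this with the direct-sum decomposition of the previous step yields $V \cong (V/Ker(\alpha)) \oplus Ker(\alpha)$. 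I expect the main subtlety to lie not in any single calculation but in the bookkeeping required to assert a genuine direct sum of Hom-Jordan algebras, that is, in verifying the vanishing $\mu(Im(\alpha), Ker(\alpha)) = 0$ and confirming that each restricted triple is itself multiplicative and satisfies the Hom-Jordan identity; once these are in place, the identification with the quotient is immediate.
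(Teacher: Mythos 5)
Your proposal is correct and follows essentially the same route as the paper's proof: both use the idempotency $\alpha^{2} = \alpha$ to split $V = Im(\alpha) \oplus Ker(\alpha)$ as a direct sum of Hom-ideals and then identify $Im(\alpha)$ with $V/Ker(\alpha)$ (your map $x \mapsto \bar{x}$ is precisely the inverse of the paper's isomorphism $\varphi(\bar{x}) = \alpha(x)$, since $\alpha$ restricts to the identity on $Im(\alpha)$). One point in your favour: you explicitly verify $\mu(Im(\alpha), Ker(\alpha)) \subseteq Im(\alpha) \cap Ker(\alpha) = \{0\}$, a step the paper leaves implicit even though it is exactly what is needed for the splitting to be a direct sum of Hom-Jordan algebras with component-wise product, rather than merely a direct sum of vector spaces.
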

\begin{proof}
Set $V_{1} = (V/Ker(\alpha)) \oplus Ker(\alpha)$. According to Corollary \ref{cor:3.2}, $(V/Ker(\alpha), \bar{\mu}, \bar{\alpha})$ is a Hom-Jordan algebra. It's obvious that $(Ker(\alpha), \mu, \alpha|_{Ker(\alpha)})$ is a Hom-Jordan algebra. Define $\mu_{1} : V_{1} \times V_{1} \rightarrow V_{1}$ and $\alpha_{1} : V_{1} \rightarrow V_{1}$ by
\[\mu_{1}((\bar{x}, k_{1}), (\bar{y}, k_{2})) = (\overline{\mu(x, y)}, \mu(k_{1}, k_{2})),\]
\[\alpha_{1}((\bar{x}, k_{1})) = (\overline{\alpha(x)}, 0).\]
Then $(V_{1}, \mu_{1}, \alpha_{1})$ is a Hom-Jordan algebra and $V_{1} = (V/Ker(\alpha)) \oplus Ker(\alpha)$ is the direct sum of ideals.

Now we show that $(V, \mu, \alpha) \cong (V_{1}, \mu_{1}, \alpha_{1})$. According to the assumption, we have $Im(\alpha)$ is a Hom-ideal of $(V, \mu, \alpha)$. For any $x \in Ker(\alpha) \cap Im(\alpha)$, there exists $y \in V$ such that $x = \alpha(y)$. Then we have
\[0 = \alpha(x) = \alpha^{2}(y) = \alpha(y) = x,\]
so $Ker(\alpha) \cap Im(\alpha) = \{0\}$. So for any $x \in V$, we have $x = x - \alpha(x) + \alpha(x)$ where $x - \alpha(x) \in Ker(\alpha)$ and $\alpha(x) \in Im(\alpha)$. Therefore, $V = Ker(\alpha) \oplus Im(\alpha)$.

Obviously, $(Im(\alpha), \mu, \alpha|_{Im(\alpha)})$ is a Hom-Jordan algebra.

Next we'll show that $(Im(\alpha), \mu, \alpha|_{Im(\alpha)}) \cong (V/Ker(\alpha), \bar{\mu}, \bar{\alpha})$. Define $\varphi : V/Ker(\alpha) \rightarrow Im(\alpha)$ by $\varphi(\bar{x}) = \alpha(x)$ for all $\bar{x} \in V/Ker(\alpha)$. Obviously, $\varphi$ is bijective. For all $\bar{x}, \bar{y} \in V/Ker(\alpha)$, we have
\[\varphi(\bar{\mu}(\bar{x}, \bar{y})) = \varphi(\overline{\mu(x, y)}) = \alpha(\mu(x, y)) = \mu(\alpha(x), \alpha(y)) = \mu(\varphi(\bar{x}), \varphi(\bar{y})),\]
\[\varphi(\bar{\alpha}(\bar{x})) = \varphi(\overline{\alpha(x)}) = \alpha^{2}(x) = \alpha(\varphi(\bar{x})),\]
which implies that $\varphi \circ \bar{\alpha} = \alpha \circ \varphi$. Therefore, $\varphi$ is an isomorphism, i.e., $(Im(\alpha), \mu, \alpha|_{Im(\alpha)}) \cong (V/Ker(\alpha), \bar{\mu}, \bar{\alpha})$. Therefore, $V = Ker(\alpha) \oplus Im(\alpha) \cong (V/Ker(\alpha)) \oplus Ker(\alpha)$.
\end{proof}
\section{Classification of simple multiplicative Hom-Jordan algebras}\label{se:4}
In this section, we'll give a theorem about classification on simple multiplicative Hom-Jordan algebras. At first, we give a construction of $n$-dimensional simple Hom-Jordan algebras.
\begin{thm}\label{thm:4.1}
There exist $n$-dimensional simple Hom-Jordan algebras for any $n \in \mathbb{Z}^{+}$.
\end{thm}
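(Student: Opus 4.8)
The plan is to realize the structural picture supplied by Theorem~\ref{thm:3.5}: a simple multiplicative Hom-Jordan algebra arises from a semi-simple Jordan algebra whose simple ideals are mutually isomorphic and on which the twisting map $\alpha$ acts simply transitively. Since simple Jordan algebras do not exist in every dimension, I would not look for one of dimension $n$; instead I would build from the smallest possible block, the one-dimensional Jordan algebra $F$ itself, take $n$ copies of it, and let $\alpha$ permute these copies cyclically. This fixes the dimension at $n$ while forcing $\alpha$ to act transitively on the simple summands, which is precisely the feature that will rule out proper Hom-ideals.

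Concretely, I would set $V = F^{n}$ with the componentwise product $\mu'$, so that $(V, \mu')$ is a Jordan algebra with basis $e_{1}, \dots, e_{n}$ satisfying $\mu'(e_{i}, e_{j}) = \delta_{ij} e_{i}$. I would take $\alpha$ to be the cyclic shift $\alpha(e_{i}) = e_{i+1}$ with indices read modulo $n$; checking $\mu'(\alpha(x), \alpha(y)) = \alpha(\mu'(x, y))$ on basis elements shows $\alpha \in \mathrm{Aut}(V)$. Defining $\mu(x, y) = \alpha(\mu'(x, y))$, Lemma~\ref{le:2.8}~(1) immediately gives that $(V, \mu, \alpha)$ is a multiplicative Hom-Jordan algebra of dimension $n$ with $\alpha \neq 0$.

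Next I would verify simplicity. Because $\mu'(V, V) = V$ and $\alpha$ is bijective, $\mu(V, V) = \alpha(\mu'(V, V)) = \alpha(V) = V$, so the product is nondegenerate. For the ideal condition, I would take a nonzero Hom-ideal $I$ and show $I = V$. Since $\alpha$ is a bijection of the finite-dimensional space $V$ with $\alpha(I) \subseteq I$, a dimension count upgrades this to $\alpha(I) = I$, hence $\alpha^{-1}(I) = I$; together with $\mu(V, I) \subseteq I$ this yields $\mu'(V, I) = \alpha^{-1}(\mu(V, I)) \subseteq I$, so $I$ is an ordinary ideal of $(V, \mu')$. An elementary observation — if $x = \sum_{i} x_{i} e_{i} \in I$ and $x_{j} \neq 0$, then $\mu'(x, e_{j}) = x_{j} e_{j} \in I$ forces $e_{j} \in I$ — shows that every ideal of $F^{n}$ is a coordinate subspace $\bigoplus_{i \in S} F e_{i}$. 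The condition $\alpha(I) = I$ then says the index set $S$ is invariant under the cyclic shift, and transitivity of the shift on $\{1, \dots, n\}$ leaves only $S = \emptyset$ or $S = \{1, \dots, n\}$, i.e. $I = 0$ or $I = V$. Hence $(V, \mu, \alpha)$ is simple, and $n$ was arbitrary.

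The step I expect to be the main obstacle is the ideal correspondence together with the transitivity argument: one must pass carefully from a Hom-ideal of $(V, \mu, \alpha)$ to an $\alpha$-invariant ideal of the induced Jordan algebra $(V, \mu')$, and this passage rests on upgrading $\alpha(I) \subseteq I$ to $\alpha(I) = I$, where the bijectivity of $\alpha$ in finite dimension is essential. The remaining verifications — that $\alpha$ is a Jordan automorphism and that the ideals of $F^{n}$ are exactly the coordinate subspaces — are routine.
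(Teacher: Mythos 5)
Your proposal is correct, and it proves the statement by a genuinely different route than the paper. The paper splits into three cases and writes down explicit multiplication tables: for $n=1$ it takes $\mathbb{R}^{+}$ with $\alpha = k\,id$; for $n=2$ a two-dimensional algebra over $\mathbb{C}$ with $\mu(e_{0},e_{0})=e_{0}$, $\mu(e_{1},e_{1})=e_{1}$, $\mu(e_{0},e_{1})=e_{0}+e_{1}$ and diagonal $\alpha$; and for $n\geq 3$ the cyclic-index algebra $\mu(a_{\bar{i}},a_{\overline{i+1}})=a_{\overline{i+2}}$ (all other products zero), where \emph{any} endomorphism $\alpha$ works; in each case simplicity is checked by hands-on computation with an element of a hypothetical nonzero Hom-ideal. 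You instead give one uniform construction for all $n$: the Yau twist (Lemma~\ref{le:2.8}~(1)) of the semi-simple Jordan algebra $F^{n}$ by the cyclic shift, with simplicity proved structurally — upgrading $\alpha(I)\subseteq I$ to $\alpha(I)=I$ by a dimension count, passing to an $\alpha$-invariant ideal of the induced Jordan algebra, classifying ideals of $F^{n}$ as coordinate subspaces, and invoking transitivity of the shift. Your approach buys uniformity (no case split), produces \emph{multiplicative} Hom-Jordan algebras with invertible $\alpha$ in every dimension (a stronger conclusion that matches the paper's focus and concretely realizes the structure forced by Theorem~\ref{thm:3.5}~(1): isomorphic simple ideals permuted simply transitively by $\alpha$); the paper's approach buys a much larger family of examples for $n\geq 3$, since there $\alpha$ may be an arbitrary nonzero endomorphism — not necessarily invertible, and the resulting algebras need not be multiplicative — which your construction, tied to $\alpha\in\mathrm{Aut}(V)$, cannot produce.
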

\begin{proof}
When $n = 1$, let $V = \mathbb{R}^{+}$ over $\mathbb{R}$, i.e, $\mu : V \times V \rightarrow V,\;\mu(a, b) = \frac{1}{2}(ab + ba)$ for all $a, b \in \mathbb{R}$. It's obvious that $\rm{dim}\it{(V)} = 1$. Take $\alpha = k\;id_{\mathbb{R}}$ for $k \in \mathbb{R}$. Then $(V, \mu, \alpha)$ is a $1$-dimensional Hom-Jordan algebra. Obviously, $(V, \mu, \alpha)$ is simple since $(V, \mu, \alpha)$ has no non trivial Hom-ideal and $\mu(V, V) = V$.

When $n = 2$, let $\{e_{0}, e_{1}\}$ be a basis of $2$-dimensional vector space over $\mathbb{C}$. Define a bilinear symmetric binary operation $\mu : V \times V \rightarrow V$:
\[\mu(e_{0}, e_{0}) = e_{0},\;\mu(e_{1}, e_{1}) = e_{1},\;\mu(e_{0}, e_{1}) = \mu(e_{1}, e_{0}) = e_{0} + e_{1}.\]
Obviously, $\mu(V, V) = V$.
Take $\alpha \in End(V)$ where
\[\alpha(e_{0}) = pe_{0}, \alpha(e_{1}) = qe_{1},\;p, q \in \mathbb{C}.\]
One can verify that $(V, \mu, \alpha)$ is a $2$-dimensional Hom-Jordan algebra. Next we'll show that $(V, \mu, \alpha)$ is simple.

Suppose that $I$ is a non trivial Hom-ideal of $(V, \mu, \alpha)$. Then there exists $0 \neq a = t_{1}e_{0} + t_{2}e_{1} \in I$, where $t_{1}, t_{2} \in \mathbb{C}$. Then we have $(t_{1} + t_{2})e_{0} + t_{2}e_{1} = \mu(a, e_{0}) \in I$, i.e, $\frac{t_{1} + t_{2}}{t_{1}} = \frac{t_{2}}{t_{2}}$; $t_{1}e_{0} + (t_{1} + t_{2})e_{1} = \mu(a, e_{1}) \in I$, i.e, $\frac{t_{2}}{t_{1} + t_{2}} = \frac{t_{1}}{t_{1}}$ since $\rm{dim}\it{(I)} = 1$. So $t_{1} + t_{2} = t_{1}$, $t_{1} + t_{2} = t_{2}$, which imply that $t_{1} = 0, t_{2} = 0$. Hence, $I = 0$, contradiction. Therefore, $(V, \mu, \alpha)$ is a $2$-dimensional simple Hom-Jordan algebra.

When $n \geq 3$, let $\{a_{\bar{i}} | i \in \mathbb{Z}_{n}\}$ be a basis of $n$-dimensional vector space $V$ over $\mathbb{C}$. Define a bilinear symmetric binary operation $\mu : V \times V \rightarrow V$:
\[\mu(a_{\bar{i}}, a_{\overline{i + 1}}) = \mu(a_{\overline{i + 1}}, a_{\bar{i}}) = a_{\overline{i + 2}},\]
others are all zero.

Then for any linear map $\alpha \in  End(V)$, $(V, \mu, \alpha)$ is a Hom-Jordan algebra. Next, we prove that $(V, \mu, \alpha)$ is simple. Clearly, we have $\mu(V, V) = V$.

Let $W$ be a nonzero Hom-ideal of $(V, \mu, \alpha)$, then there exists a nonzero element $x = \sum^{n - 1}_{i = 0}x_{i}a_{\bar{i}} \in W$. Suppose that $x_{t} \neq 0$. Since $x_{t}a_{\overline{t + 2}} = \mu(a_{\bar{t}}, \mu(a_{\overline{t - 1}}, x)) \in W$, we have $a_{\overline{t + 2}} \in W$, so $a_{\overline{n - 2}}, a_{\overline{n - 1}} \in W$. So $a_{\bar{0}} = \mu(a_{\overline{n - 2}}, a_{\overline{n - 1}}) \in W$, $a_{\bar{1}} = \mu(a_{\overline{n - 1}}, a_{\bar{0}}) \in W$. Hence, we have all $a_{\bar{i}}(i \in \mathbb{Z}_{n}) \in W$. Therefore $W =V$ and $(V, \mu, \alpha)$ is simple.
\end{proof}
According to Theorem \ref{thm:3.5} (1) and Corollary \ref{cor:2.13}, the dimension of a simple multiplicative Hom-Jordan algebra can only be an integer multiple of dimensions of simple Jordan algebras.

By Theorem \ref{thm:3.5} (1) and Corollary \ref{cor:2.13}, in order to classify simple multiplicative Hom-Jordan algebras, we just classify automorphism on their induced Jordan algebras, in particular, automorphism on semi-simple Jordan algebras which are direct sum of finite isomorphic simple ideals.
\begin{thm}\label{thm:4.2}
Let $J$ be a semi-simple Jordan algebra and its $n$ simple ideals are isomorphic mutually, moreover $J$ can be generated by its automorphism $\alpha$(or $\beta$) and any simple ideal. Taking $\alpha_{n} = \alpha^{n}$, $\beta_{n} = \beta^{n}$, then $\alpha_{n}$(or $\beta_{n}$) leaves each simple ideal of $J$ invariant. Then there exists an automorphism $\varphi$ on $J$ satisfying $\varphi \circ \alpha = \beta \circ \varphi$ if and only if there exists an automorphism $\phi$ on the simple ideal of $J$ satisfying $\phi \circ \alpha^{n} = \beta^{n} \circ \phi$.
\end{thm}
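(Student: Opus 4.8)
The plan is to make the cyclic structure of $\alpha$ and $\beta$ fully explicit and then reduce the whole statement to a single ``closing up around the cycle'' computation. By Theorem~\ref{thm:3.5}(1) and the hypothesis that $J$ is generated by one simple ideal together with $\alpha$ (resp.\ $\beta$), each of $\alpha,\beta$ permutes the $n$ simple ideals as an $n$-cycle. I would label the ideals $J_0,\dots,J_{n-1}$ (indices in $\mathbb{Z}_n$) so that $\alpha(J_i)=J_{i+1}$, and record the component isomorphisms $f_i:=\alpha|_{J_i}:J_i\to J_{i+1}$; similarly write $\beta(J_i)=J_{\sigma(i)}$ for an $n$-cycle $\sigma$ with components $g_i:=\beta|_{J_i}:J_i\to J_{\sigma(i)}$. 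Fixing $J_0$ as the reference simple ideal $S$, the hypothesis that $\alpha_n=\alpha^n$ and $\beta_n=\beta^n$ stabilise each ideal says exactly that $\alpha^n|_{J_0}=f_{n-1}\cdots f_1f_0$ and $\beta^n|_{J_0}$ are the automorphisms of $S$ appearing in the theorem.

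For ($\Rightarrow$), given $\varphi\in\mathrm{Aut}(J)$ with $\varphi\circ\alpha=\beta\circ\varphi$, I would iterate to get $\varphi\circ\alpha^n=\beta^n\circ\varphi$. Since $\varphi$ permutes the ideals, say $\varphi(J_0)=J_k$, its restriction $\varphi|_{J_0}:J_0\to J_k$ intertwines $\alpha^n|_{J_0}$ with $\beta^n|_{J_k}$. To push this back onto $J_0$ I would use transitivity of $\beta$: choose $m$ with $\beta^m(J_k)=J_0$ and set $b:=\beta^m|_{J_k}:J_k\to J_0$; as $b$ commutes with $\beta^n$, the composite $\phi:=b\circ\varphi|_{J_0}\in\mathrm{Aut}(J_0)$ satisfies $\phi\circ\alpha^n=\beta^n\circ\phi$ on $J_0\cong S$, which is the required map.

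For ($\Leftarrow$), given $\phi\in\mathrm{Aut}(S)$ with $\phi\circ\alpha^n=\beta^n\circ\phi$, I would construct $\varphi$ ideal by ideal. Prescribe the permutation induced by $\varphi$ to be $\tau(i):=\sigma^i(0)$ (a bijection since $\sigma$ is an $n$-cycle), set $\varphi|_{J_0}:=\phi$, and propagate by the recursion $\varphi|_{J_{i+1}}:=g_{\tau(i)}\circ\varphi|_{J_i}\circ f_i^{-1}$, which is exactly what the target relation $\varphi\circ\alpha=\beta\circ\varphi$ forces on components. Unwinding once around the cycle gives $\varphi|_{J_n}=\beta^n|_{J_0}\circ\phi\circ(\alpha^n|_{J_0})^{-1}$, so the definition is single-valued precisely when $\beta^n|_{J_0}\circ\phi=\phi\circ\alpha^n|_{J_0}$, i.e.\ under the hypothesis. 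It then remains to check that the resulting $\varphi$ — a direct sum of Jordan-algebra isomorphisms that permutes ideals — is an automorphism of $J$ and satisfies $\varphi\circ\alpha=\beta\circ\varphi$, both of which hold by construction.

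The step I expect to be most delicate is the index bookkeeping that ties the two directions together: ensuring the reference automorphism $\alpha^n$ is read on one fixed ideal (the restrictions $\alpha^n|_{J_i}$ are only mutually conjugate, not equal), choosing $\tau$ and the power $m$ compatibly with the two possibly-distinct $n$-cycles induced by $\alpha$ and $\beta$, and verifying that the cyclic composite collapses to exactly $\phi\circ\alpha^n=\beta^n\circ\phi$ rather than a conjugate of it. Once the cycle is set up with $\tau(0)=0$, both implications funnel through this one consistency identity.
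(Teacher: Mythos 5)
Your proposal is correct and follows essentially the same route as the paper's own proof: the paper also decomposes $J = J_{1} \oplus \alpha(J_{1}) \oplus \cdots \oplus \alpha^{n-1}(J_{1})$, builds $\varphi$ on the $i$-th summand as $\beta^{i} \circ \phi \circ \alpha^{-i}$ in one direction, and recovers $\phi$ from $\varphi|_{J_{1}}$ by shifting back with a power of $\beta$ in the other, merely expressing these maps through bases and block companion/diagonal matrices instead of your component-map recursion. The consistency identity you isolate when closing the cycle is exactly the paper's matrix relation $MA_{1} = B_{1}M$, so the two arguments coincide up to presentation.
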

\begin{proof}
Let $J_{1}$ be a simple ideal of $J$. Since $\alpha_{n}$(or $\beta_{n}$) leaves each simple ideal of $J$ invariant, we have $\alpha^{n}(J_{1}) = J_{1}$(or $\beta^{n}(J_{1}) = J_{1}$) and we have
\[J = J_{1} \oplus \alpha(J_{1}) \oplus \cdots \oplus \alpha^{n - 1}(J_{1})\]
\[or\; J = J_{1} \oplus \beta(J_{1}) \oplus \cdots \oplus \beta^{n - 1}(J_{1})\]
since $J$ can be generated by its automorphism $\alpha$(or $\beta$) and any simple ideal.

Choose a basis $x = (x_{1}, x_{2}, \cdots, x_{m})$ of $J_{1}$, then
\[x^{'} = (x, \alpha(x), \alpha^{2}(x), \cdots, \alpha^{n - 1}(x)),\quad x^{''} = (x, \beta(x), \beta^{2}(x), \cdots, \beta^{n - 1}(x))\]
are both bases of $J$. Let $\alpha(x^{'}) = x^{'}A$, $\beta(x^{''}) = x^{''}B$, then
$$A = \begin{pmatrix}
0& & & &A_{1}\\
I&0& & & \\
 &I&0& & \\
 & &\ddots&\ddots& \\
 & & &I&0
\end{pmatrix}
,
B = \begin{pmatrix}
0& & & &B_{1}\\
I&0& & & \\
 &I&0& & \\
 & &\ddots&\ddots& \\
 & & &I&0
\end{pmatrix}
,
$$
where $\alpha_{n}(x) = xA_{1}$, $\beta_{n}(x) = xB_{1}$.

If there exists an automorphism $\phi$ on $J_{1}$ such that $\phi \circ \alpha_{n} = \beta_{n} \circ \phi$, let $\phi(x) = xM$, then $MA_{1} = B_{1}M$. Defining $\varphi(x^{'}) = x^{''}diag(M, \cdots, M)$. Then we have $\varphi(x^{'}) = (\phi(x), \beta(\phi(x)), \cdots, \beta^{n - 1}(\phi(x)))$. It's easy to verify that $\varphi$ is an automorphism since $\phi$ is an automorphism. Moreover,
$$\varphi \circ \alpha(x^{'}) = x^{''}
\begin{pmatrix}
M& & & & \\
 &M& & & \\
 & &\ddots& & \\
 & & &\ddots& \\
 & & & &M
\end{pmatrix}
\begin{pmatrix}
0& & & &A_{1}\\
I&0& & & \\
 &I&0& & \\
 & &\ddots&\ddots& \\
 & & &I&0
\end{pmatrix}
$$
$
= x^{''}\begin{pmatrix}
0&\cdots&\cdots&0&MA_{1}\\
M&0& & & \\
 &M&0& & \\
 & & &\ddots& \\
 & & &M&0
\end{pmatrix}
,
$
$$\beta \circ \varphi(x^{'}) = x^{''}
\begin{pmatrix}
0& & & &B_{1}\\
I&0& & & \\
 &I&0& & \\
 & &\ddots&\ddots& \\
 & & &I&0
\end{pmatrix}
\begin{pmatrix}
M& & & & \\
 &M& & & \\
 & &\ddots& & \\
 & & &\ddots& \\
 & & & &M
\end{pmatrix}
$$
$
= x^{''}\begin{pmatrix}
0&\cdots&\cdots&0&B_{1}M\\
M&0& & & \\
 &M&0& & \\
 & & &\ddots& \\
 & & &M&0
\end{pmatrix}
.
$

Note that $MA_{1} = B_{1}M$, we have $\varphi \circ \alpha = \beta \circ \varphi$.

Now suppose that there exists an automorphism $\varphi$ on $J$ satisfying $\varphi \circ \alpha = \beta \circ \varphi$. According to the proof of Lemma \ref{le:3.4}, there exists $0 \leq i \leq n - 1$ such that $\varphi(J_{1}) = \beta^{i}(J_{1})$. Then $\varphi \circ \alpha^{j}(J_{1}) = \beta^{i + j}(J_{1})(0 \leq i, j \leq n - 1)$. Let $\varphi(x) = \beta^{i}(x)M_{1}$, then $\varphi(x^{'}) = x^{''}M$, where
$$\begin{pmatrix}
M_{1}& & & \\
 &M_{1}& & & \\
 & & \ddots& & \\
 & & &\ddots& \\
 & & & &M_{1}
\end{pmatrix}
(i = 0),
$$
\begin{equation}
\bordermatrix{%
&1&\cdots&n - i&n - i + 1& &\cdots&n\cr
1 & & & &B_{1}M_{1}& & & \cr
2 & & & & &B_{1}M_{1}& & \cr
\vdots & & & & & &\ddots& \cr
i & & & & & & &B_{1}M_{1}\cr
i + 1 &M_{1}& & & & & & \cr
\vdots & &\ddots& & & & & \cr
n & & &M_{1}& & & & \cr
}(1 \leq i \leq n - 1).
\end{equation}
Defining $\phi(x) = xM_{1}$, then we have
$$\varphi(x^{'}) =
\left\{
\begin{aligned}
(\phi(x), \beta(\phi(x)), \cdots, \beta^{n - 1}(\phi(x))),(i = 0)\\
(\beta^{i}(\phi(x)), \cdots, \beta^{n - 1}(\phi(x)), \beta^{n}(\phi(x)), \cdots, \beta^{n + i - 1}(\phi(x))),(1 \leq i \leq n - 1)
\end{aligned}
\right.
$$
Therefore, $\phi$ is an automorphism on $J_{1}$ since $\varphi$ is an automorphism on $J$. Moreover, we have $\phi \circ \alpha^{n} = \beta^{n} \circ \phi$ since $\varphi \circ \alpha = \beta \circ \varphi$.
\end{proof}
By Theorem \ref{thm:4.2}, it is obvious that two simple multiplicative Hom-Jordan algebras $(V_{1}, \mu_{1}, \alpha)$ and $(V_{2}, \mu_{2}, \beta)$ are isomorphic if and only if automorphisms $\alpha^{n}$ and $\beta^{n}$ on two simple ideals (as simple Jordan algebras) of the corresponding induced Jordan algebras are conjugate.

Combing Corollary \ref{cor:2.13}, Theorems \ref{thm:3.5} (1) and \ref{thm:4.2}, we get the following theorem.
\begin{thm}\label{thm:4.3}
All finite dimensional simple multiplicative Hom-Jordan algebras can be denoted as $(X, n, \Gamma_{\alpha})$, where $X$ represents the type of  the simple ideal (as the simple Jordan algebra) of the corresponding induced Jordan algebras, $n$ represents numbers of simple ideals, $\Gamma_{\alpha}$ represents the sets of conjugate classes of the automorphism $\alpha^{n}$ on the simple Jordan algebra $X$, i.e., $\Gamma_{\alpha} = \{\phi \circ \alpha^{n} \circ \phi^{-1} | \phi \in \rm{Aut}\it{(X)}\}$.
\end{thm}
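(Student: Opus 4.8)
The plan is to show that the assignment sending a finite dimensional simple multiplicative Hom-Jordan algebra to the triple $(X, n, \Gamma_{\alpha})$ is a well-defined \emph{complete} isomorphism invariant, by assembling Lemma \ref{le:2.12}, Theorem \ref{thm:3.5} (1), Corollary \ref{cor:2.13} and Theorem \ref{thm:4.2}. First I would extract the triple. Given a simple multiplicative Hom-Jordan algebra $(V, \mu, \alpha)$ with $\alpha \neq 0$, Lemma \ref{le:2.12} makes it a Jordan-type Hom-Jordan algebra with induced Jordan algebra $(V, \mu^{'})$, and Theorem \ref{thm:3.5} (1) guarantees that $(V, \mu^{'})$ is semi-simple, that all its simple ideals are mutually isomorphic of a single type $X$, and that $\alpha$ acts simply transitively on them, so that $V = V_{1} \oplus \alpha(V_{1}) \oplus \cdots \oplus \alpha^{n - 1}(V_{1})$ for a simple ideal $V_{1}$ of type $X$. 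This reads off $X$ and the number $n$ of simple ideals; since $\alpha^{n}$ leaves $V_{1}$ invariant, it restricts to an automorphism $\alpha^{n}|_{V_{1}} \in \mathrm{Aut}(X)$, whose conjugacy class I take to be $\Gamma_{\alpha}$.

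Next I would check independence of all choices. The type $X$ and the integer $n$ depend only on the decomposition into minimal (simple) ideals, which is unique and is preserved by any isomorphism of induced Jordan algebras; hence they are isomorphism invariants of the Hom-Jordan algebra via Corollary \ref{cor:2.13}. For $\Gamma_{\alpha}$, a different choice of simple ideal has the form $\alpha^{k}(V_{1})$, and the restriction $\alpha^{n}|_{\alpha^{k}(V_{1})}$ is carried to $\alpha^{n}|_{V_{1}}$ by conjugation with $\alpha^{k}$; since $\alpha^{k}$ maps $V_{1}$ isomorphically onto $\alpha^{k}(V_{1})$, the two restrictions are conjugate in $\mathrm{Aut}(X)$, so $\Gamma_{\alpha}$ is unambiguous.

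For completeness of the invariant I would invoke the discussion following Theorem \ref{thm:4.2}: by Corollary \ref{cor:2.13} two simple multiplicative Hom-Jordan algebras $(V_{1}, \mu_{1}, \alpha)$ and $(V_{2}, \mu_{2}, \beta)$ are isomorphic exactly when there is a Jordan algebra isomorphism $\phi$ of their induced algebras with $\beta \circ \phi = \phi \circ \alpha$, and Theorem \ref{thm:4.2} translates the existence of such a $\phi$ into the conjugacy of $\alpha^{n}$ and $\beta^{n}$ on a simple ideal. Thus two algebras carry the same triple $(X, n, \Gamma_{\alpha})$ if and only if they are isomorphic. To round out the classification I would verify realizability: given any data $(X, n, \Gamma)$ with representative $\sigma \in \mathrm{Aut}(X)$, form $J = X^{\oplus n}$, let $\alpha$ be the cyclic shift of the summands whose $n$-th power restricts to $\sigma$ (the matrix $A$ of Theorem \ref{thm:4.2}), and set $\mu = \alpha \circ \mu^{'}$; Lemma \ref{le:2.8} (1) makes $(J, \mu, \alpha)$ multiplicative, and simplicity follows because $\alpha$ permutes the minimal ideals as a single $n$-cycle.

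The step I expect to be the main obstacle is this realizability check, specifically the analysis of $\alpha$-invariant ideals. One must argue that, since $\alpha$ is invertible, a Hom-ideal $W$ is exactly an $\alpha$-invariant ideal of $(V, \mu^{'})$; that the ideals of the semi-simple $J = X^{\oplus n}$ are precisely the partial direct sums of the $n$ isomorphic simple summands, so that even with isomorphic pieces no spurious diagonal ideals appear; and finally that invariance under a single $n$-cycle forces such a partial sum to be $0$ or all of $J$, together with $\mu(J, J) = \alpha(\mu^{'}(J,J)) = J$. Everything else is a bookkeeping assembly of the previously established theorems.
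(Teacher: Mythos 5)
Your proposal is correct and follows essentially the same route as the paper, whose entire proof is the one-line assembly of Corollary \ref{cor:2.13}, Theorem \ref{thm:3.5} (1) and Theorem \ref{thm:4.2} together with the remark on conjugacy of $\alpha^{n}$ and $\beta^{n}$ following Theorem \ref{thm:4.2} --- exactly the skeleton of your first and third paragraphs. Your two extra verifications (that $\Gamma_{\alpha}$ does not depend on the choice of simple ideal, and that every triple $(X, n, \Gamma)$ is realized by the cyclic-shift construction, using that Hom-ideals are $\alpha$-invariant ideals and that ideals of a direct sum of simple summands are partial sums) are sound and fill gaps the paper leaves implicit.
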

\begin{ex}\label{ex:4.4}
Suppose that $V$ is a $2$-dimensional vector space with basis $\{e_{1}, e_{2}\}$. Define $\mu : V \times V \rightarrow V$ to be a bilinear map by
$$
\left\{
\begin{aligned}
\mu(e_{1}, e_{1}) = e_{2},\\
\mu(e_{2}, e_{2}) = e_{1},\\
\mu(e_{1}, e_{2}) = \mu(e_{2}, e_{1}) = 0,
\end{aligned}
\right.
$$
and $\alpha : V \rightarrow V$ a linear map by
$$
\left\{
\begin{aligned}
\alpha(e_{1}) = e_{2},\\
\alpha(e_{2}) = e_{1}.
\end{aligned}
\right.
$$
Then $(V, \mu, \alpha)$ is a simple multiplicative Hom-Jordan algebra. Moreover, its induced Jordan algebra is $(V, \mu^{'})$ where $\mu^{'} : V \times V \rightarrow V$ satisfies
$$
\left\{
\begin{aligned}
\mu^{'}(e_{1}, e_{1}) = e_{1},\\
\mu^{'}(e_{2}, e_{2}) = e_{2},\\
\mu^{'}(e_{1}, e_{2}) = \mu(e_{2}, e_{1}) = 0.
\end{aligned}
\right.
$$
$(V, \mu^{'})$ is semi-simple and has the decomposition of simple ideals $V = V_{1} \oplus V_{2}$ where $V_{1}, V_{2}$ are simple ideals generated by $e_{1}, e_{2}$ respectively. Moreover, we get $V_{1}$ is isomorphic to $V_{2}$.

According to Theorem \ref{thm:4.3}, $(V, \mu, \alpha)$ can be denoted as $(V_{1}, 2, \alpha^{2})$ or $(V_{2}, 2, \alpha^{2})$.
\end{ex}
\section{Bimodules of simple multiplicative Hom-Jordan algebras}\label{se:5}
In this section, we mainly study bimodules of simple multiplicative Hom-Jordan algebras. We will give a theorem on relationships between bimodules of Jordan-type Hom-Jordan algebras and modules of their induced Jordan algebras. Moreover, some propositions about bimoudles of simple multiplicative Hom-Jordan algebras are also obtained.
\begin{defn}\cite{A1}
Let $(V, \mu, \alpha)$ be a Hom-Jordan algebra. A $V$-bimodule is a Hom-module $(W, \alpha_{W})$ that comes equipped with a left structure map $\rho_{l} : V \otimes W \rightarrow W(\rho_{l}(a \otimes w) = a \cdot w)$ and a right structure map $\rho_{r} : W \otimes V \rightarrow W(\rho_{l}(w \otimes a) = w \cdot a)$ such that the following conditions:
\begin{enumerate}[(1)]
\item $\rho_{r} \circ \tau_{1} = \rho_{l}$ where $\tau_{1} : V \otimes W \rightarrow W \otimes V$, $a \otimes w \mapsto w \otimes a$;
\item $\alpha_{W}(w \cdot a) \cdot \alpha(\mu(b, c)) + \alpha_{W}(w \cdot b) \cdot \alpha(\mu(c, a)) + \alpha_{W}(w \cdot c) \cdot \alpha(\mu(a, b)) = (\alpha_{W}(w) \cdot \mu(b, c)) \cdot \alpha^{2}(a) + (\alpha_{W}(w) \cdot \mu(c, a)) \cdot \alpha^{2}(b) + (\alpha_{W}(w) \cdot \mu(a, b)) \cdot \alpha^{2}(c)$;
\item $\alpha_{W}(w \cdot a) \cdot \alpha(\mu(b, c)) + \alpha_{W}(w \cdot b) \cdot \alpha(\mu(c, a)) + \alpha_{W}(w \cdot c) \cdot \alpha(\mu(a, b)) = ((w \cdot a) \cdot \alpha(b)) \cdot \alpha^{2}(c) + ((w \cdot c) \cdot \alpha(b)) \cdot \alpha^{2}(a) + \mu(\mu(a, c), \alpha(b)) \cdot \alpha_{W}^{2}(w)$;
\end{enumerate}
hold for all $a, b, c \in V$, $w \in W$.
\end{defn}
\begin{ex}
In \cite{A1}, we know that $(V, \alpha)$ is a $V$-bimodule where the structure maps are $\rho_{l} = \rho_{r} = \mu$ where $(V, \mu, \alpha)$ is a Hom-Jordan algebra.
\end{ex}
Next, we construct an example which come true on a field of prime characteristic.
\begin{ex}
Let $(V, \mu, \alpha)$ be the Hom-Jordan algebra in Example \ref{ex:4.4} and $W$ a $1$-dimensional vector space with basis $\{w_{1}\}$. Define $\alpha_{W} : W \rightarrow W$ to be a linear map by $\alpha_{W}(w_{1}) = w_{1}$. Define $\rho_{l} : V \otimes W \rightarrow W$ to be a linear map by
    $$
    \left\{
    \begin{aligned}
    \rho_{l}(e_{1}, w_{1}) = w_{1},\\
    \rho_{l}(e_{2}, w_{1}) = w_{1},
    \end{aligned}
    \right.
    $$
    and $\rho_{r} : W \otimes V \rightarrow W$ a linear map by
    $$
    \left\{
    \begin{aligned}
    \rho_{l}(w_{1}, e_{1}) = w_{1},\\
    \rho_{l}(w_{1}, e_{2}) = w_{1}.
    \end{aligned}
    \right.
    $$
    Then $(W, \alpha_{W})$ is a $V$-bimodule with the structure maps defined as above when the characteristic of the ground field is two.
\end{ex}
\begin{defn}\cite{N1}
A Jordan module is a system consisting of a vector space $V$, a Jordan algebra $J$, and two compositions $x \cdot a$, $a \cdot x$ for $x$ in $V$, $a$ in $J$ which are bilinear and satisfy
\begin{enumerate} [(i)]
\item $a \cdot x = x \cdot a$,
\item $(x \cdot a) \cdot (b \circ c) + (x \cdot b) \cdot (c \circ a) + (x \cdot c) \cdot (a \circ b) = (x \cdot (b \circ c)) \cdot a + (x \cdot (c \circ a)) \cdot b + (x \cdot (a \circ b)) \cdot c$,
\item $(((x \cdot a) \cdot b) \cdot c) + (((x \cdot c) \cdot b) \cdot a) + x \cdot (a \circ c \circ b) = (x \cdot a) \cdot (b \circ c) + (x \cdot b) \cdot (c \circ a) + (x \cdot c) \cdot (a \circ b)$,
\end{enumerate}
where $x \in V, a , b , c \in J$, and $a_{1} \circ a_{2} \circ a_{3}$ stands for $((a_{1} \circ a_{2}) \circ a_{3})$.
\end{defn}
\begin{thm}\label{thm:5.5}
Let $(V, \mu, \alpha)$ be a Jordan-type Hom-Jordan algebra with $(V, \mu^{'})$ the induced Jordan algebra.
\begin{enumerate}[(1)]
\item Let $(W, \alpha_{W})$ be a $V$-bimodule of $(V, \mu, \alpha)$ with $\rho_{l}(\rho_{r})$ the left structure map(respectively, the right structure map). Suppose that $\alpha_{W}$ is invertible and satisfies $\alpha_{W}(w \cdot a) = \alpha_{W}(w) \cdot \alpha(a)$ for all $a \in V$, $w \in W$. Then $W$ is a module of the induced Jordan algebra $(V, \mu^{'})$ with two compositions $w \cdot^{'} a = \alpha_{W}^{-1}(w \cdot a)$, $a \cdot^{'} w = \alpha_{W}^{-1}(a \cdot w)$ for all $a \in V$, $w \in W$.
\item Let $W$ be a module of the induced Jordan algebra $(V, \mu^{'})$ with two compositions $w \cdot^{'} a$, $a \cdot^{'} w$ for all $a \in V$, $w \in W$. If there exists $\alpha_{W} \in End(W)$ such that $\alpha_{W}(w \cdot^{'} a) = \alpha_{W}(w) \cdot^{'} \alpha(a)$ for all $a \in V$, $w \in W$. Then $(W, \alpha_{W})$ is a $V$-bimodule of $(V, \mu, \alpha)$ with the left structure map $\rho_{l} : V \otimes W \rightarrow W(\rho_{l}(a \otimes w) = \alpha_{W}(a \cdot^{'} w))$, the right structure map $\rho_{r} : W\otimes V \rightarrow W(\rho_{r}(w \otimes a) = \alpha_{W}(w \cdot^{'} a))$.
\end{enumerate}
\end{thm}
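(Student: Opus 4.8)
The plan is to treat both parts as a single dictionary that translates each Hom-bimodule axiom into the corresponding ordinary Jordan-module axiom and back, the only inputs being the induced-product relation $\mu(x,y)=\alpha(\mu'(x,y))$, equivalently $\mu'(x,y)=\alpha^{-1}(\mu(x,y))$ from Lemma~\ref{le:2.8}(2), together with the relevant compatibility hypothesis on the twisting map $\alpha_W$. Conceptually the whole argument is bookkeeping: once one knows how to commute $\alpha_W$ and $\alpha^{-1}$ past the left and right actions, every term of one set of axioms is a fixed power of $\alpha_W$ applied to the matching term of the other.

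For Part~(1) I would first record the ``shuffle'' identity implied by $\alpha_W(w\cdot a)=\alpha_W(w)\cdot\alpha(a)$: substituting $\alpha_W^{-1}(w),\alpha^{-1}(a)$ gives $\alpha_W^{-1}(w)\cdot\alpha^{-1}(a)=\alpha_W^{-1}(w\cdot a)$, and iterating yields $\alpha_W^{-k}(w)\cdot\alpha^{-k}(a)=\alpha_W^{-k}(w\cdot a)$. With this, Jordan-module axiom~(i) is immediate from bimodule axiom~(1), since $a\cdot' x=\alpha_W^{-1}(a\cdot x)=\alpha_W^{-1}(x\cdot a)=x\cdot' a$. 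For axioms~(ii) and~(iii) the strategy is to expand each primed product $w\cdot' a=\alpha_W^{-1}(w\cdot a)$ and each $\mu'=\alpha^{-1}\mu$, and then use the shuffle identity to pull every $\alpha_W^{-1}$ to the outside. The point to verify is that each monomial of the Jordan identity equals $\alpha_W^{-3}$ applied to the matching Hom-monomial (for instance $(x\cdot' a)\cdot'\mu'(b,c)=\alpha_W^{-3}\big(\alpha_W(x\cdot a)\cdot\alpha(\mu(b,c))\big)$, and likewise for the triple products in~(iii)); since the exponent $-3$ is uniform across all terms, the whole Jordan identity is exactly $\alpha_W^{-3}$ of the Hom identity, and invertibility of $\alpha_W$ lets us cancel.

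Part~(2) runs the same dictionary in reverse. With $w\cdot a:=\alpha_W(w\cdot' a)$ and $a\cdot w:=\alpha_W(a\cdot' w)$, axiom~(1) follows from commutativity~(i) of the Jordan module, and axioms~(2),(3) are obtained by substituting these definitions and $\mu=\alpha\mu'$, collecting the twisting maps via the compatibility $\alpha_W(w\cdot' a)=\alpha_W(w)\cdot'\alpha(a)$, and recognizing the outcome as $\alpha_W^{3}$ applied to the Jordan identities~(ii),(iii). Here only $\alpha_W\in\mathrm{End}(W)$ is needed, since all powers of $\alpha_W$ that occur are positive; invertibility is not required, which matches the weaker hypotheses of Part~(2).

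I expect the main obstacle to be the third axiom in each direction, bimodule condition~(3) versus Jordan condition~(iii), since it carries three nested actions and the asymmetric last term $\mu(\mu(a,c),\alpha(b))\cdot\alpha_W^{2}(w)$, so the tracking of $\alpha$- and $\alpha_W$-powers is most delicate there. The term $x\cdot(a\circ c\circ b)$ of the Jordan module must be matched, using commutativity of the action, with that last term, and this forces one to check the single auxiliary identity $\mu(\mu(a,c),\alpha(b))=\alpha^{2}\big(\mu'(\mu'(a,c),b)\big)$, which follows by applying $\mu=\alpha\mu'$ and $\mu'(\alpha(x),\alpha(y))=\mu(x,y)$ twice. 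Once this identity is in hand the powers line up to $-3$ (respectively $3$) exactly as in the other monomials, and the remaining verifications are routine.
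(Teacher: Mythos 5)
Your proposal is correct and takes essentially the same route as the paper's proof: both arguments verify the axioms term by term, using the compatibility of $\alpha_{W}$ with the actions to exhibit each Jordan-module identity as $\alpha_{W}^{-3}$ (respectively $\alpha_{W}^{3}$ in Part (2), where invertibility is indeed not needed) applied to the matching Hom-bimodule identity. In particular, the auxiliary identity $\mu(\mu(a,c),\alpha(b)) = \alpha^{2}(\mu^{'}(\mu^{'}(a,c),b))$ that you single out as the delicate point is exactly the step the paper carries out when handling the asymmetric last term of bimodule condition (3).
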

\begin{proof}
(1) For any $x \in W$, $a, b, c \in V$, we have
\[a \cdot^{'} x = \alpha_{W}^{-1}(a \cdot x) = \alpha_{W}^{-1}(\rho_{l}(a \otimes x)) =  \alpha_{W}^{-1}(\rho_{r} \circ \tau_{1} (a \otimes x)) = \alpha_{W}^{-1}(x \cdot a) = x \cdot^{'} a.\]
\begin{align*}
&(x \cdot^{'} a) \cdot^{'} \mu^{'}(b, c) + (x \cdot^{'} b) \cdot^{'} \mu^{'}(c, a) + (x \cdot^{'} c) \cdot^{'} \mu^{'}(a, b)\\
&= \alpha_{W}^{-1}(\alpha_{W}^{-1}(x \cdot a) \cdot \alpha^{-1}(\mu(b, c))) + \alpha_{W}^{-1}(\alpha_{W}^{-1}(x \cdot b) \cdot \alpha^{-1}(\mu(c, a)))\\
&+ \alpha_{W}^{-1}(\alpha_{W}^{-1}(x \cdot c) \cdot \alpha^{-1}(\mu(a, b)))\\
&= \alpha_{W}^{-3}(\alpha_{W}^{2}(\alpha_{W}^{-1}(x \cdot a) \cdot \alpha^{-1}(\mu(b, c))) + \alpha_{W}^{2}(\alpha_{W}^{-1}(x \cdot b) \cdot \alpha^{-1}(\mu(c, a)))\\
&+ \alpha_{W}^{2}(\alpha_{W}^{-1}(x \cdot c) \cdot \alpha^{-1}(\mu(a, b))))\\
&= \alpha_{W}^{-3}(\alpha_{W}((x \cdot a) \cdot \mu(b, c)) + \alpha_{W}((x \cdot b) \cdot \mu(c, a)) + \alpha_{W}((x \cdot c) \cdot \mu(a, b)))\\
&= \alpha_{W}^{-3}(\alpha_{W}(x \cdot a) \cdot \alpha(\mu(b, c)) + \alpha_{W}(x \cdot b) \cdot \alpha(\mu(c, a)) + \alpha_{W}(x \cdot c) \cdot \alpha(\mu(a, b)))\\
&= \alpha_{W}^{-3}((\alpha_{W}(x) \cdot \mu(b, c)) \cdot \alpha^{2}(a) + (\alpha_{W}(x) \cdot \mu(c, a)) \cdot \alpha^{2}(b) + (\alpha_{W}(x) \cdot \mu(a, b)) \cdot \alpha^{2}(c))\\
&= \alpha_{W}^{-3}(\alpha_{W}(x \cdot \alpha^{-1}(\mu(b, c))) \cdot \alpha^{2}(a) + \alpha_{W}(x \cdot \alpha^{-1}(\mu(c, a))) \cdot \alpha^{2}(b)\\
&+ \alpha_{W}(x \cdot \alpha^{-1}(\mu(a, b))) \cdot \alpha^{2}(c))\\
&= \alpha_{W}^{-3}(\alpha_{W}^{2}(x \cdot^{'} \mu^{'}(b, c)) \cdot \alpha^{2}(a) + \alpha_{W}^{2}(x \cdot^{'} \mu^{'}(c, a)) \cdot \alpha^{2}(b) + \alpha_{W}^{2}(x \cdot^{'} \mu^{'}(a, b)) \cdot \alpha^{2}(c))\\
&= \alpha_{W}^{-3}(\alpha_{W}^{2}((x \cdot^{'} \mu^{'}(b, c)) \cdot a) + \alpha_{W}^{2}((x \cdot^{'} \mu^{'}(c, a)) \cdot b) + \alpha_{W}^{2}((x \cdot^{'} \mu^{'}(a, b)) \cdot c))\\
&= \alpha_{W}^{-1}((x \cdot^{'} \mu^{'}(b, c)) \cdot a) + \alpha_{W}^{-1}((x \cdot^{'} \mu^{'}(c, a)) \cdot b) + \alpha_{W}^{-1}((x \cdot^{'} \mu^{'}(a, b)) \cdot c)\\
&= (x \cdot^{'} \mu^{'}(b, c)) \cdot^{'} a + (x \cdot^{'} \mu^{'}(c, a)) \cdot^{'} b + (x \cdot^{'} \mu^{'}(a, b)) \cdot^{'} c.
\end{align*}
\begin{align*}
&(x \cdot^{'} a) \cdot^{'} \mu^{'}(b, c) + (x \cdot^{'} b) \cdot^{'} \mu^{'}(c, a) + (x \cdot^{'} c) \cdot^{'} \mu^{'}(a, b)\\
&= \alpha_{W}^{-3}(\alpha_{W}(x \cdot a) \cdot \alpha(\mu(b, c)) + \alpha_{W}(x \cdot b) \cdot \alpha(\mu(c, a)) + \alpha_{W}(x \cdot c) \cdot \alpha(\mu(a, b)))\\
&= \alpha_{W}^{-3}(((x \cdot a) \cdot \alpha(b)) \cdot \alpha^{2}(c) + ((x \cdot c) \cdot \alpha(b)) \cdot \alpha^{2}(a) + \mu(\mu(a, c), \alpha(b)) \cdot \alpha_{W}^{2}(x))\\
&= \alpha_{W}^{-3}((\alpha_{W}(x \cdot^{'} a) \cdot \alpha(b)) \cdot \alpha^{2}(c) + (\alpha_{W}(x \cdot^{'} c) \cdot \alpha(b)) \cdot \alpha^{2}(a)\\
&+ \alpha (\mu^{'}(\alpha(\mu^{'}(a, c)), \alpha(b))) \cdot \alpha_{W}^{2}(x))\\
&= \alpha_{W}^{-3}(\alpha_{W}((x \cdot^{'} a) \cdot b) \cdot \alpha^{2}(c) + \alpha_{W}((x \cdot^{'} c) \cdot b) \cdot \alpha^{2}(a) + \alpha^{2}(\mu^{'}(\mu^{'}(a, c), b)) \cdot \alpha_{W}^{2}(x))\\
&= \alpha_{W}^{-3}(\alpha_{W}^{2}((x \cdot^{'} a) \cdot^{'} b) \cdot \alpha^{2}(c) + \alpha_{W}^{2}((x \cdot^{'} c) \cdot^{'} b) \cdot \alpha^{2}(a) + \alpha^{2}(\mu^{'}(\mu^{'}(a, c), b)) \cdot \alpha_{W}^{2}(x))\\
&= \alpha_{W}^{-3}(\alpha_{W}^{2}(((x \cdot^{'} a) \cdot^{'} b) \cdot c) + \alpha_{W}^{2}(((x \cdot^{'} c) \cdot^{'} b) \cdot a)+ \alpha_{W}^{2}(\mu^{'}(\mu^{'}(a, c), b) \cdot x))\\
&= \alpha_{W}^{-1}(((x \cdot^{'} a) \cdot^{'} b) \cdot c) + \alpha_{W}^{-1}(((x \cdot^{'} c) \cdot^{'} b) \cdot a) + \alpha_{W}^{-1}(\mu^{'}(\mu^{'}(a, c), b) \cdot x)\\
&= ((x \cdot^{'} a) \cdot^{'} b) \cdot^{'} c + ((x \cdot^{'} c) \cdot^{'} b) \cdot^{'} a + \mu^{'}(\mu^{'}(a, c), b) \cdot^{'} x.
\end{align*}
Therefore, $W$ is a module of the induced Jordan algebra $(V, \mu^{'})$.

(2) For any $w \in W$, $a, b, c \in V$, we have
\[\rho_{r} \circ \tau_{1}(a \otimes w) = \alpha_{W}(w \cdot^{'} a) = \alpha_{W}(a \cdot^{'} w) = \rho_{l}(a \otimes w),\]
which implies that $\rho_{r} \circ \tau_{1} = \rho_{l}$.
\begin{align*}
&\alpha_{W}(w \cdot a) \cdot \alpha(\mu(b, c)) + \alpha_{W}(w \cdot b) \cdot \alpha(\mu(c, a)) + \alpha_{W}(w \cdot c) \cdot \alpha(\mu(a, b))\\
&= \alpha_{W}(\alpha_{W}^{2}(w \cdot^{'} a) \cdot^{'} \alpha^{2}(\mu^{'}(b, c))) + \alpha_{W}(\alpha_{W}^{2}(w \cdot^{'} b) \cdot^{'} \alpha^{2}(\mu^{'}(c, a)))\\
&+ \alpha_{W}(\alpha_{W}^{2}(w \cdot^{'} c) \cdot^{'} \alpha^{2}(\mu^{'}(a, b)))\\
&= \alpha_{W}(\alpha_{W}^{2}((w \cdot^{'} a) \cdot^{'} \mu^{'}(b, c)) + \alpha_{W}^{2}((w \cdot^{'} b) \cdot^{'} \mu^{'}(c, a)) + \alpha_{W}^{2}((w \cdot^{'} c) \cdot^{'} \mu^{'}(a, b)))\\
&= \alpha_{W}^{3}((w \cdot^{'} a) \cdot^{'} \mu^{'}(b, c) + (w \cdot^{'} b) \cdot^{'} \mu^{'}(c, a) + (w \cdot^{'} c) \cdot^{'} \mu^{'}(a, b))\\
&= \alpha_{W}^{3}((w \cdot^{'} \mu^{'}(b, c)) \cdot^{'} a + (w \cdot^{'} \mu^{'}(c, a)) \cdot^{'} b + (w \cdot^{'} \mu^{'}(a, b)) \cdot^{'} c)\\
&= \alpha_{W}^{2}(\alpha_{W}(w \cdot^{'} \mu^{'}(b, c)) \cdot^{'} \alpha(a)) + \alpha_{W}^{2}(\alpha_{W}(w \cdot^{'} \mu^{'}(c, a)) \cdot^{'} \alpha(b))\\
&+ \alpha_{W}^{2}(\alpha_{W}(w \cdot^{'} \mu^{'}(a, b)) \cdot^{'} \alpha(c))\\
&= \alpha_{W}(\alpha_{W}^{2}(w \cdot^{'} \mu^{'}(b, c)) \cdot^{'} \alpha^{2}(a)) + \alpha_{W}(\alpha_{W}^{2}(w \cdot^{'} \mu^{'}(c, a)) \cdot^{'} \alpha^{2}(b))\\
&+ \alpha_{W}(\alpha_{W}^{2}(w \cdot^{'} \mu^{'}(a, b)) \cdot^{'} \alpha^{2}(c))\\
&= \alpha_{W}^{2}(w \cdot^{'} \mu^{'}(b, c)) \cdot \alpha^{2}(a) + \alpha_{W}^{2}(w \cdot^{'} \mu^{'}(c, a)) \cdot \alpha^{2}(b) + \alpha_{W}^{2}(w \cdot^{'} \mu^{'}(a, b)) \cdot \alpha^{2}(c)\\
&= \alpha_{W}(\alpha_{W}(w) \cdot^{'} \alpha(\mu^{'}(b, c))) \cdot \alpha^{2}(a) + \alpha_{W}(\alpha_{W}(w) \cdot^{'} \alpha(\mu^{'}(c, a))) \cdot \alpha^{2}(b)\\
&+ \alpha_{W}(\alpha_{W}(w) \cdot^{'} \alpha(\mu^{'}(a, b))) \cdot \alpha^{2}(c)\\
&= (\alpha_{W}(w) \cdot \mu(b, c)) \cdot \alpha^{2}(a) + (\alpha_{W}(w) \cdot \mu(c, a)) \cdot \alpha^{2}(b) + (\alpha_{W}(w) \cdot \mu(a, b)) \cdot \alpha^{2}(c).
\end{align*}
\begin{align*}
&\alpha_{W}(w \cdot a) \cdot \alpha(\mu(b, c)) + \alpha_{W}(w \cdot b) \cdot \alpha(\mu(c, a)) + \alpha_{W}(w \cdot c) \cdot \alpha(\mu(a, b))\\
&= \alpha_{W}^{3}((w \cdot^{'} a) \cdot^{'} \mu^{'}(b, c) + (w \cdot^{'} b) \cdot^{'} \mu^{'}(c, a) + (w \cdot^{'} c) \cdot^{'} \mu^{'}(a, b))\\
&= \alpha_{W}^{3}(((w \cdot^{'} a) \cdot^{'} b) \cdot^{'} c + ((w \cdot^{'} c) \cdot^{'} b) \cdot^{'} a + \mu^{'}(\mu^{'}(a, c), b) \cdot^{'} w)\\
&= \alpha_{W}^{2}(\alpha_{W}((w \cdot^{'} a) \cdot^{'} b) \cdot^{'} \alpha(c)) + \alpha_{W}^{2}(\alpha_{W}((w \cdot^{'} c) \cdot^{'} b) \cdot^{'} \alpha(a))\\
&+ \alpha_{W}^{2}(\alpha(\mu^{'}(\mu^{'}(a, c), b)) \cdot^{'} \alpha_{W}(w))\\
&= \alpha_{W}(\alpha_{W}^{2}((w \cdot^{'} a) \cdot^{'} b) \cdot^{'} \alpha^{2}(c)) + \alpha_{W}(\alpha_{W}^{2}((w \cdot^{'} c) \cdot^{'} b) \cdot^{'} \alpha^{2}(a))\\
&+ \alpha_{W}(\alpha^{2}(\mu^{'}(\mu^{'}(a, c), b)) \cdot^{'} \alpha_{W}^{2}(w))\\
&= \alpha_{W}^{2}((w \cdot^{'} a) \cdot^{'} b) \cdot \alpha^{2}(c) + \alpha_{W}^{2}((w \cdot^{'} c) \cdot^{'} b) \cdot \alpha^{2}(a) + \alpha^{2}(\mu^{'}(\mu^{'}(a, c), b)) \cdot \alpha_{W}^{2}(w)\\
&= \alpha_{W}(\alpha_{W}(w \cdot^{'} a) \cdot^{'} \alpha(b)) \cdot \alpha^{2}(c) + \alpha_{W}(\alpha_{W}(w \cdot^{'} c) \cdot^{'} \alpha(b)) \cdot \alpha^{2}(a)\\
&+ \alpha(\mu^{'}(\alpha(\mu^{'}(a, c)), \alpha(b))) \cdot \alpha_{W}^{2}(w)\\
&= ((w \cdot a) \cdot \alpha(b)) \cdot \alpha^{2}(c) + ((w \cdot c) \cdot \alpha(b)) \cdot \alpha^{2}(a) + \mu(\mu(a, c), \alpha(b)) \cdot \alpha_{W}^{2}(w).
\end{align*}
Therefore, $(W, \alpha_{W})$ is a $V$-bimodule of $(V, \mu, \alpha)$.
\end{proof}
\begin{defn}
For a bimodule $(W, \alpha_{W})$ of a Hom-Jordan algebra $(V, \mu, \alpha)$, if a subspace $W_{0} \subseteq W$ satisfies that $\rho_{l}(a \otimes w) \in W_{0}$ for any $a \in V$, $w \in W_{0}$ and $\alpha_{W}(W_{0}) \subseteq W_{0}$, then $(W_{0}, \alpha_{W}|_{W_{0}})$ is called a $V$-submodule of $(W, \alpha_{W})$.

A bimodule $(W, \alpha_{W})$ of a Hom-Jordan algebra $(V, \mu, \alpha)$ is called irreducible, if it has precisely two $V$-submodules (itself and $0$) and it is called completely reducible if $W = W_{1} \oplus W_{2} \oplus \cdots \oplus W_{s}$, where $(W_{i}, \alpha_{W}|_{W_{i}})$ are irreducible $V$-submodules.
\end{defn}
\begin{prop}
Suppose that $(W, \alpha_{W})$ is a bimodule of simple multiplicative Hom-Jordan algebra $(V, \mu, \alpha)$ with $\alpha_{W}(a \cdot w) = \alpha(a) \cdot \alpha_{W}(w)$ for all $a \in V$, $w \in W$. Then, $Ker(\alpha_{W})$, $Im(\alpha_{W})$ are submodules of $W$ for $(V, \mu, \alpha)$. Moreover, we have an isomorphism of $(V, \mu, \alpha)$-modules $\overline{\alpha_{W}} : W/Ker(\alpha_{W}) \rightarrow Im(\alpha_{W})$.
\end{prop}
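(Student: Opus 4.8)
The plan is to verify the two submodule claims directly from the definition of a $V$-submodule given just above, and then to realize $\overline{\alpha_{W}}$ as the linear isomorphism produced by the first isomorphism theorem for the linear map $\alpha_{W}$, checking afterwards that it respects the bimodule structure. The structural input I would lean on throughout is that, since $(V, \mu, \alpha)$ is simple, Lemma \ref{le:2.12} forces $\alpha$ to be invertible; this invertibility is precisely what makes $Im(\alpha_{W})$ stable under the action, and it is the only place where the simplicity hypothesis is genuinely needed.

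First I would treat $Ker(\alpha_{W})$. It is $\alpha_{W}$-invariant since $\alpha_{W}(Ker(\alpha_{W})) = 0 \subseteq Ker(\alpha_{W})$. For stability under the left action, I take $w \in Ker(\alpha_{W})$ and $a \in V$ and apply the hypothesis to get $\alpha_{W}(a \cdot w) = \alpha(a) \cdot \alpha_{W}(w) = \alpha(a) \cdot 0 = 0$, so $a \cdot w \in Ker(\alpha_{W})$; stability under the right action then follows from condition (1) of the bimodule definition, $\rho_{r} \circ \tau_{1} = \rho_{l}$, which identifies $w \cdot a$ with $a \cdot w$. Note that no invertibility is required for this part.

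Next I would treat $Im(\alpha_{W})$, which is clearly $\alpha_{W}$-invariant. For stability under the action I write a typical element as $\alpha_{W}(w)$ and compute, for $a \in V$, that $a \cdot \alpha_{W}(w) = \alpha(\alpha^{-1}(a)) \cdot \alpha_{W}(w) = \alpha_{W}(\alpha^{-1}(a) \cdot w) \in Im(\alpha_{W})$, where the first step uses invertibility of $\alpha$ and the second is the hypothesis applied to $\alpha^{-1}(a)$. Again the right action is handled by condition (1). This is exactly the step that uses simplicity of $(V, \mu, \alpha)$.

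Finally I would define $\overline{\alpha_{W}}(\bar{w}) = \alpha_{W}(w)$. Both well-definedness and injectivity reduce to the single observation that $\alpha_{W}(w) = 0$ if and only if $\bar{w} = \bar{0}$ in $W/Ker(\alpha_{W})$, while surjectivity onto $Im(\alpha_{W})$ is immediate, so $\overline{\alpha_{W}}$ is a linear isomorphism. To upgrade it to an isomorphism of $(V, \mu, \alpha)$-modules I would check that it carries the induced twisting map $\bar{w} \mapsto \overline{\alpha_{W}(w)}$ on $W/Ker(\alpha_{W})$ to $\alpha_{W}$ on $Im(\alpha_{W})$ (both composites send $\bar{w}$ to $\alpha_{W}^{2}(w)$), together with compatibility with the action, which comes from the chain $\overline{\alpha_{W}}(a \cdot \bar{w}) = \alpha_{W}(a \cdot w) = \alpha(a) \cdot \alpha_{W}(w) = \alpha(a) \cdot \overline{\alpha_{W}}(\bar{w})$ via the hypothesis. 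I expect the main point to get right here to be the bookkeeping of the twist: the morphism identity that actually holds carries the $\alpha$-twist on the acting element $a$ (exactly as the algebra isomorphism $\varphi$ of the preceding proposition satisfies $\varphi \circ \bar{\alpha} = \alpha \circ \varphi$), so I would make sure the module-isomorphism convention is phrased to record this twisted compatibility rather than the untwisted identity $\overline{\alpha_{W}}(a \cdot \bar{w}) = a \cdot \overline{\alpha_{W}}(\bar{w})$, which does not hold in general.
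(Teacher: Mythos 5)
Your proposal is correct and takes essentially the same route as the paper: the kernel is handled by the identity $\alpha_{W}(a \cdot w) = \alpha(a) \cdot \alpha_{W}(w)$, the image by writing $a = \alpha(\tilde{a})$ (surjectivity of $\alpha$, which comes from simplicity via Lemma \ref{le:2.12} and is exactly where that hypothesis is used), and the isomorphism is the induced map $\overline{\alpha_{W}}(\bar{w}) = \alpha_{W}(w)$. The only difference is that the paper ends with ``it's easy to verify that $\overline{\alpha_{W}}$ is an isomorphism,'' whereas you actually carry out that verification and correctly note that the compatibility with the action holds in the twisted form $\overline{\alpha_{W}}(a \cdot \bar{w}) = \alpha(a) \cdot \overline{\alpha_{W}}(\bar{w})$ rather than the untwisted one.
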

\begin{proof}
For any $w \in Ker(\alpha_{W})$, we have
\[\alpha_{W}(a \cdot w) = \alpha(a) \cdot \alpha_{W}(w) = 0,\quad\forall a \in V,\]
which implies that $a \cdot w \in Ker(\alpha_{W})$. Obviously, $\alpha_{W}(Ker(\alpha_{W})) \subseteq Ker(\alpha_{W})$. Therefore, $Ker(\alpha_{W})$ is a submodule of $W$ for $(V, \mu, \alpha)$.

For any $w \in Im(\alpha_{W})$, $a \in V$, there exists $u \in W$, $\tilde{a} \in V$ such that $w = \alpha_{W}(u)$, $a = \alpha(\tilde{a})$. Then
\[a \cdot w = \alpha(\tilde{a}) \cdot \alpha_{W}(u) = \alpha_{W}(\tilde{a} \cdot u) \in Im(\alpha_{W}),\]
it's obvious that $\alpha_{W}(Im(\alpha_{W})) \subseteq Im(\alpha_{W})$. So $Im(\alpha_{W})$ is a submodule of $W$ for $(V, \mu, \alpha)$.

Define $\overline{\alpha_{W}} : W/Ker(\alpha_{W}) \rightarrow Im(\alpha_{W})$ by $\overline{\alpha_{W}}(\bar{w}) = \alpha_{W}(w)$. It's easy to verify that $\overline{\alpha_{W}}$ is an isomorphism.
\end{proof}
\begin{cor}
If $(W, \alpha_{W})$ is a irreducible bimodule of simple multiplicative Hom-Jordan algebra $(V, \mu, \alpha)$ with $\alpha_{W}(a \cdot w) = \alpha(a) \cdot \alpha_{W}(w)$ for all $a \in V$, $w \in W$. Then $\alpha_{W}$ is invertible.
\end{cor}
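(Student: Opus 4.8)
The plan is to read off the invertibility of $\alpha_{W}$ directly from the irreducibility of $(W, \alpha_{W})$ together with the structural content of the preceding Proposition, which already does the essential work. Under the standing compatibility $\alpha_{W}(a \cdot w) = \alpha(a) \cdot \alpha_{W}(w)$, that Proposition shows both $Ker(\alpha_{W})$ and $Im(\alpha_{W})$ are $V$-submodules of $(W, \alpha_{W})$, and moreover produces an isomorphism of $(V, \mu, \alpha)$-modules $\overline{\alpha_{W}} : W/Ker(\alpha_{W}) \rightarrow Im(\alpha_{W})$. So the first step is simply to invoke these three facts rather than reprove anything about the module axioms.

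Next I would feed these two submodules into the definition of irreducibility, which asserts that $(W, \alpha_{W})$ has precisely the two submodules $0$ and $W$. This forces $Ker(\alpha_{W}) \in \{0, W\}$ and $Im(\alpha_{W}) \in \{0, W\}$; the target is to eliminate everything except $Ker(\alpha_{W}) = 0$ (injectivity) and $Im(\alpha_{W}) = W$ (surjectivity), whose conjunction is exactly invertibility. To cut down the four a priori combinations I would use the isomorphism $W/Ker(\alpha_{W}) \cong Im(\alpha_{W})$ together with $W \neq 0$ (which is forced, since irreducibility demands two distinct submodules). The combination $Ker(\alpha_{W}) = 0$, $Im(\alpha_{W}) = 0$ yields $W \cong 0$, and the combination $Ker(\alpha_{W}) = W$, $Im(\alpha_{W}) = W$ yields $0 \cong W$; both contradict $W \neq 0$ and are discarded. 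What survives is precisely a dichotomy: either $\alpha_{W}$ is bijective (the case $Ker(\alpha_{W}) = 0$, $Im(\alpha_{W}) = W$), or $\alpha_{W} = 0$ (the case $Ker(\alpha_{W}) = W$, $Im(\alpha_{W}) = 0$).

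The entire difficulty therefore collapses to a single point: excluding the degenerate case $\alpha_{W} = 0$. I expect this to be the only delicate step, and the rest is bookkeeping — once $\alpha_{W} \neq 0$ is secured, one has $Ker(\alpha_{W}) \neq W$ and $Im(\alpha_{W}) \neq 0$, so the dichotomy above immediately gives $Ker(\alpha_{W}) = 0$ and $Im(\alpha_{W}) = W$, whence $\alpha_{W}$ is invertible. To rule out $\alpha_{W} = 0$ I would appeal to the nontriviality built into the present setting, in parallel with the convention $\alpha \neq 0$ imposed on simple Hom-Jordan algebras: a genuine Hom-module is understood to carry a nonzero twisting map. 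I would make this hypothesis explicit rather than conceal it inside the case analysis, since the formal possibility $\alpha_{W} = 0$ (for instance on a one-dimensional $W$ with trivial action) is consistent with all the bimodule axioms and is exactly the configuration that an irreducibility argument alone cannot exclude.
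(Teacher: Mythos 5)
Your proposal follows exactly the route the paper intends: the corollary is stated there with no proof at all, as an immediate consequence of the preceding proposition, and the intended argument is precisely yours --- $Ker(\alpha_{W})$ and $Im(\alpha_{W})$ are $V$-submodules, so irreducibility leaves only the combinations you enumerate. Where you go beyond the paper is in refusing to bury the case $\alpha_{W} = 0$ inside the case analysis, and you are right that this case is a genuine problem rather than a formality: take $W$ one-dimensional with the zero action $a \cdot w = 0$ and $\alpha_{W} = 0$; all three bimodule axioms and the compatibility condition $\alpha_{W}(a \cdot w) = \alpha(a) \cdot \alpha_{W}(w)$ hold vacuously, every subspace is a submodule, and since $\dim W = 1$ there are precisely the two submodules $0$ and $W$, so $(W, \alpha_{W})$ is irreducible in the paper's sense while $\alpha_{W}$ is not invertible. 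Thus the corollary as literally stated fails in this degenerate configuration, and the nontriviality hypothesis you make explicit (the analogue of the convention $\alpha \neq 0$ imposed on simple Hom-Jordan algebras) is actually needed, not cosmetic. In short: your argument is correct modulo that clearly flagged assumption, it coincides with the paper's implicit proof, and it is more candid than the paper about exactly what that proof establishes.
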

\begin{prop}
Suppose that $(V, \mu, \alpha)$ is a simple multiplicative Hom-Jordan algebra and $(W, \alpha_{W})$ is a bimodule with $\alpha_{W}(a \cdot w) = \alpha(a) \cdot \alpha_{W}(w)$ for all $a \in V$, $w \in W$. Moreover, $\alpha_{W}$ is invertible. If $W$ is an irreducible module of the induced Jordan algebra $(V, \mu^{'})$ with two compositions $w \cdot^{'} a = \alpha_{W}^{-1}(w \cdot a)$, $a \cdot^{'} w = \alpha_{W}^{-1}(a \cdot w)$ for all $a \in V$, $w \in W$, then $(W, \alpha_{W})$ is an irreducible bimodule of $(V, \mu, \alpha)$.
\end{prop}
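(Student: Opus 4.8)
The plan is to establish a correspondence between the $V$-submodules of the Hom-bimodule $(W, \alpha_{W})$ and the submodules of $W$ regarded as a module of the induced Jordan algebra $(V, \mu^{'})$, and then transport the irreducibility across. Concretely, I would take an arbitrary $V$-submodule $(W_{0}, \alpha_{W}|_{W_{0}})$ of $(W, \alpha_{W})$ and show that $W_{0}$ is automatically a submodule of the induced Jordan module; irreducibility of the latter then forces $W_{0} = 0$ or $W_{0} = W$, which is exactly the assertion that $(W, \alpha_{W})$ is irreducible.

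First I would record the consequence of invertibility of $\alpha_{W}$. By the definition of a $V$-submodule we have $\alpha_{W}(W_{0}) \subseteq W_{0}$, and since $\alpha_{W}$ is invertible the restriction $\alpha_{W}|_{W_{0}}$ is an injective endomorphism of $W_{0}$, whence $\alpha_{W}(W_{0}) = W_{0}$ and therefore also $\alpha_{W}^{-1}(W_{0}) = W_{0}$. This stability of $W_{0}$ under $\alpha_{W}^{-1}$ is the technical point that makes the two module structures interchangeable, since the Jordan-module compositions are built from $\alpha_{W}^{-1}$.

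Next I would verify that $W_{0}$ is closed under the Jordan-module operations $\cdot^{'}$. For $a \in V$ and $w \in W_{0}$, the submodule condition together with $\rho_{r} \circ \tau_{1} = \rho_{l}$ (so that $a \cdot w = w \cdot a$) gives $w \cdot a \in W_{0}$, and hence $w \cdot^{'} a = \alpha_{W}^{-1}(w \cdot a) \in \alpha_{W}^{-1}(W_{0}) = W_{0}$, and symmetrically $a \cdot^{'} w \in W_{0}$. Thus $W_{0}$ is a submodule of the induced Jordan module. Since that module is irreducible and $W \neq 0$, the only such $W_{0}$ are $0$ and $W$; hence $(W, \alpha_{W})$ admits precisely the two $V$-submodules $0$ and $W$, i.e. it is irreducible.

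The only delicate point is the equality $\alpha_{W}(W_{0}) = W_{0}$: an invertible operator carrying a subspace into itself need not carry it onto itself in infinite dimensions, so I would either invoke finite-dimensionality of $W$ (consistent with the finite-dimensional setting of the paper) or note that the $\alpha_{W}$-stable subspaces under consideration are finite-dimensional, which guarantees $\alpha_{W}^{-1}(W_{0}) = W_{0}$. Everything else is a direct translation through the compositions $w \cdot^{'} a = \alpha_{W}^{-1}(w \cdot a)$ and $a \cdot^{'} w = \alpha_{W}^{-1}(a \cdot w)$ already supplied in the hypotheses, the latter guaranteeing (via Theorem \ref{thm:5.5}) that $W$ is genuinely a module of $(V, \mu^{'})$.
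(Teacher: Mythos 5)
Your proposal is correct and follows essentially the same route as the paper's proof: the paper assumes a nontrivial $V$-submodule $W_{0}$ exists, uses $a \cdot^{'} w = \alpha_{W}^{-1}(a \cdot w) \in \alpha_{W}^{-1}(W_{0}) = W_{0}$ to make $W_{0}$ a nontrivial submodule of the induced Jordan module, and derives a contradiction, which is your argument in contrapositive form. The only difference is that you explicitly justify the equality $\alpha_{W}^{-1}(W_{0}) = W_{0}$ via finite-dimensionality, a point the paper asserts without comment.
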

\begin{proof}
Assume that $(W, \alpha_{W})$ is reducible. Then there exists $W_{0} \neq \{0_{W}\}$ a subspace of $W$ such that $(W_{0}, \alpha_{W}|_{W{0}})$ is a submodule of $(W, \alpha_{W})$. That is $\alpha_{W}(W_{0}) \subseteq W_{0}$ and $a \cdot w \in W_{0}$, for any $a \in V$, $w \in W_{0}$. Hence, $a \cdot^{'} w = \alpha_{W}^{-1}(a \cdot w) \in \alpha_{W}^{-1}(W_{0}) = W_{0}$. So $W_{0}$ is a non trivial submodule of $W$ for $(V, \mu^{'})$, contradiction. Hence, $(W, \alpha_{W})$ is an irreducible bimodule of $(V, \mu, \alpha)$.
\end{proof}
\begin{re}
In \cite{Y1}, the author introduced another definition of Hom-Jordan algebras, depending on which one could verify that  all above results  are also valid.
\end{re}

{\bf ACKNOWLEDGEMENTS}\quad The authors would like to thank the referee for valuable comments and suggestions on this article.

\end{document}